\begin{document}

\title{Point compression for the trace zero subgroup\\ over a small degree extension field\thanks{This article appeared in {\it Designs, Codes and Cryptography}, the final publication is available at \url{http://link.springer.com/article/10.1007\%2Fs10623-014-9921-0}.}}

\author{Elisa Gorla}
\affil{\small Institut de math\'ematiques, Universit\'e de Neuch\^atel, Rue Emile-Argand 11, 2000 Neuch\^atel, Switzerland, \small {\tt elisa.gorla@unine.ch}}
\author{Maike Massierer}
\affil{\small Mathematisches Institut, Universit\"at Basel, Rheinsprung 21, 4051 Basel, Switzerland, {\tt maike.massierer@unibas.ch}}

\date{}

\maketitle

\begin{abstract}
\noindent {\bf Abstract}\quad Using Semaev's summation polynomials, we derive a new equation for the $\Fq$-rational points of the trace zero variety of an elliptic curve defined over $\Fq$. Using this equation, we produce an optimal-size representation for such points. Our representation is compatible with scalar multiplication. We give a point compression algorithm to compute the representation and a decompression algorithm to recover the original point (up to some small ambiguity). The algorithms are efficient for trace zero varieties coming from small degree extension fields. We give explicit equations and discuss in detail the practically relevant cases of cubic and quintic field extensions.

\medskip \noindent
{\bf Keywords}\quad elliptic curve cryptography, pairing-based cryptography, discrete logarithm problem, trace zero variety, efficient representation, point compression, summation polynomials

\medskip \noindent
{\bf Mathematics Subject Classification}\quad 14G50, 11G25, 14H52, 11T71, 14K15
\end{abstract}

\section{Introduction} \label{sec:intro}


Given a (hyper)elliptic curve defined over $\Fq$ and a field extension
$\Fq|\Fqn$, consider the $\Fqn$-rational points of trace zero. They
form a subgroup of the group of $\Fqn$-rational points of the curve,
and can be realized as the $\Fq$-rational points of an abelian variety
built by Weil restriction from the original curve, called the trace
zero variety. The trace zero subgroup was first proposed for use in
cryptography by Frey \cite{frey-99}, and further studied by Naumann \cite{naumann}, Weimerskirch \cite{weimerskirch}, Blady \cite{blady}, Lange
\cite{lange-phd,lange-04},
Avanzi--Cesena \cite{avanzi-cesena-07,cesena-10}, and Diem-Scholten \cite{diem-scholten}. Trace zero subgroups are interesting because they allow
efficient arithmetic, due to a speed-up of the standard scalar
multiplication using the Frobenius endomorphism. This is analogous to
the use of endomorphisms to speed up scalar multiplication on Koblitz
curves (see \cite{koblitz-91}) and GLV--GLS curves (see
\cite{glv,gls}), which are the basis for several recent implementation
speed records for elliptic curve arithmetic (see
\cite{longa-sica-12,faz-hernandez-13,bos-costello-hisil-lauter-13}).

The trace zero subgroup is of interest in the context of pairing-based cryptography. 
Rubin and Silverberg have shown in \cite{rubin-silverberg-02,rubin-silverberg-09} 
that the security of pairing-based cryptosystems can be
improved by using abelian varieties of dimension greater than one in
place of elliptic curves. Jacobians of hyperelliptic curves and trace zero
varieties are therefore the canonical examples for such applications.

Since the trace zero subgroup is contained in the group of
$\Fqn$-rational points of the (Jacobian of the) curve, the DLP in the
trace zero subgroup is at most as hard as the DLP in the curve. It is
easy to show that in fact the DLP's in the two groups have the same
complexity. From a mathematical point of view therefore, trace zero
variety cryptosystems may be regarded as the (hyper)elliptic curve analog of
torus-based cryptosystems such as LUC \cite{luc}, Gong--Harn
\cite{gong-harn-99}, XTR \cite{xtr}, and CEILIDIH \cite{rubin-silverberg-03}. 

The hardness of the discrete logarithm problem in a group is closely connected with the size of the representation of the group elements. Usually, the hardness of the DLP is measured as a function of the group size. However, for practical purposes, the comparison with the size of the representation of group elements is a better indicator, since it quantifies the storage and transmission costs connected with using the corresponding cryptosystem. Therefore, in order to make the comparison between DLP complexity and group size a fair one, we are interested in a compact representation that reflects the size of the group. Such an optimal-size representation consists of $\log_2 N$ bits, where $N$ is the size of the group. See also \cite{gorla-11} for a discussion on the significance of compact representations.

An optimal-size representation for elliptic curves is well-known. In the cryptographic setting, it is standard procedure to represent an elliptic curve point by its $x$-coordinate only, since the $y$-coordinate can easily be recomputed, up to sign, from the curve equation. If desired, the sign can be stored in one extra bit of information. Representing a point via its $x$-coordinate gives an optimal representation for the elements of the group of $\Fqn$-rational points of an elliptic curve: Each of the approximately $q^n$ points can be represented by one element of $\Fqn$, or $n$ elements of $\Fq$ after choosing a basis of the field extension. Notice moreover that storing the sign of the $y$-coordinate is unnecessary, since this representation is compatible with scalar multiplication of points: For any $k\in\Z$, the $x$-coordinates of the points $kP$ and $-kP$ coincide.

The trace zero variety of an elliptic curve with respect to a {\it prime} extension degree $n$ has dimension $n-1$, and we are interested in the $\Fq$-rational points. Hence, an optimal representation should have $\log_2 q^{n-1}$ bits, or consist of $n-1$ elements of $\Fq$. For practical purposes, it is important that the representation can be efficiently computed (``compression'') and that the original point can be easily recovered, possibly up to some small ambiguity, from the representation (``decompression'').
Naumann \cite{naumann}, Rubin--Silverberg \cite{rubin-silverberg-02,rubin-silverberg-04,silverberg-05}, and Lange \cite{lange-04} propose compact representations with compression and decompression algorithms for genus 1 and genus 2 curves, respectively. The work by Eagle--Galbraith--Ong \cite{eagle-galbraith-ong-11} on point compression methods for Koblitz curves is also related. 

In this paper, we concentrate on extension fields of degree $n=3$ or $5$. This is due to the fact that an index calculus attack \cite{gaudry-09} and a cover attack \cite{diem-ghs,diem-06,diem-kochinke} apply to $T_n$, making it vulnerable for large values of $n$. In this work we briefly discuss these attacks and come to the conclusion that there are no security issues for $n=3$. For $n=5$ the cover attacks can be avoided by imposing extra conditions, and the known index calculus attacks do not threaten the security of pairing-based cryptosystems involving trace zero subgroups of supersingular curves.

The main purpose of this paper is introducing a new representation for the points on the trace zero variety of an elliptic curve. The compression and decompression algorithms are more efficient than that of \cite{silverberg-05}, and points are recovered with smaller ambiguity. In addition, our representation is (to the extent of our knowledge) the only one that is compatible with scalar multiplication of points, which is the only operation needed in Diffie--Hellman-based cryptographic protocols.

The paper is structured as follows: In Section \ref{sec:standardrep}, we fix the notation, give the relevant definitions, and briefly recall the standard representation for points on the trace zero variety. We also discuss the simple case of the trace zero variety for a quadratic field extension. Using Semaev's summation polynomials, in Section \ref{sec:eqn1} we derive a single equation whose $\Fq$-solutions describe the $\Fq$-points of the trace zero variety, up to a few well-described exceptions (see Lemma~\ref{lemma:eqn} and Proposition~\ref{prop:eqn}).
In Section \ref{sec:eqn2}, using the equation that we produced in the previous section, we propose a new representation for the points on the trace zero variety. The size of the representation is optimal, and we give efficient compression and decompression algorithms. In Sections \ref{sec:eqnsdegree3} and \ref{sec:eqnsdegree5} we analyze in detail what our method produces for the cases $n=3$ and $5$. We give explicit equations and concrete examples computed with Magma and comment on security issues for these parameters. It is generally agreed that 3 and 5 are the practically relevant extension degrees in the case of elliptic curves (see e.g.\ \cite{lange-04}).

\section{Preliminaries} \label{sec:standardrep}

Let $\Fq$ be a finite field with $q$ elements, 
and let $E$ be an elliptic curve defined over $\Fq$ by an affine Weierstra\ss\ equation.
We consider the group $E(\Fqn)$ of $\Fqn$-rational points of $E$ for field extensions of {\it prime} degree $n$. The group operation is point addition, and the neutral element is the point at infinity, denoted by $\O$. We denote indeterminates by lower case letters and finite field elements by upper case letters.

\begin{definition}
The Frobenius endomorphism on $E$ is defined by $$\varphi : E \rightarrow E,~ (X,Y) \mapsto (X^q, Y^q), ~\O \mapsto \O.$$ One can define a trace map $$\Tr : E(\Fqn) \mapsto E(\Fq), ~P \mapsto P + \varphi(P) + \varphi^2(P) + \ldots + \varphi^{n-1}(P),$$ relative to the field extension $\Fqn|\Fq$. The kernel of the trace map is the {\it trace zero subgroup} of $E(\Fqn)$, which we denote by $T_n$. 

By the process of Weil restriction, the points of $T_n$ can be viewed as the $\Fq$-rational points of an abelian variety $V$ of dimension $n-1$ defined over $\Fq$. $V$ is called the {\it trace zero variety}. 
\end{definition}

In trace zero subgroups, arithmetic can be made more efficient by using the Frobenius endomorphism, following a similar approach to Koblitz curves and GLV--GLS curves. They turn out to be extremely interesting in the context of pairing-based cryptography, where they achieve the largest security parameters in some cases, as discussed in \cite{rubin-silverberg-02,rubin-silverberg-09,avanzi-cesena-07,cesena-10}.

It is easy to show that the DLP in $E(\Fqn)$ is as hard as the DLP in $T_n$. An explanation is given in \cite{granger-vercauteren-05} for the analogous case of algebraic tori: The trace maps a DLP in $E(\Fqn)$ to a DLP in $E(\Fq)$. By solving it in the smaller group, the discrete logarithm is obtained modulo the order of $E(\Fq)$. The remaining modular information required to compute the full discrete logarithm comes from solving a DLP in $T_n$. A formal argument, which applies to any short exact sequence of algebraic groups, is given in~\cite{galbraith-smith-06}.

\begin{proposition}
Consider the exact sequence
$$ 0 \longrightarrow T_n \longrightarrow E(\Fqn) \overset{\Tr}{\longrightarrow} E(\Fq) \longrightarrow 0. $$ 
Then solving a DLP in $E(\Fqn)$ has the same complexity as solving a DLP in $T_n$ and a DLP in $E(\Fq)$.
\end{proposition}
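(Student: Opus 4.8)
The plan is to prove the claimed equality of complexities by exhibiting polynomial-time reductions in both directions. One direction is immediate: both $T_n=\ker(\Tr)$ and the group $E(\Fq)$ of $\Fq$-rational points are subgroups of $E(\Fqn)$, and a DLP instance in a subgroup is in particular a DLP instance in the ambient group. Hence any algorithm solving the DLP in $E(\Fqn)$ also solves a DLP in $T_n$ or in $E(\Fq)$ at no extra cost, so the DLP in $E(\Fqn)$ is at least as hard as each of the other two.

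For the converse, suppose I am given $P,Q\in E(\Fqn)$ with $Q=kP$ and wish to recover $k$. First I would apply the trace, obtaining $\Tr(Q)=k\,\Tr(P)$, which is a DLP instance in $E(\Fq)$; solving it determines $k$ modulo $m_1:=\mathrm{ord}(\Tr(P))$, so I may write $k=k_1+m_1t$ with $k_1$ known and $t$ unknown. Since $\Tr$ is a homomorphism and $NP=\O$ for $N:=\mathrm{ord}(P)$, we have $m_1\mid N$. I would then set $P':=m_1P$ and $Q':=Q-k_1P=(k-k_1)P=tP'$. The \emph{crucial} point is that $P',Q'\in T_n$: indeed $\Tr(P')=m_1\Tr(P)=\O$, and $\Tr(Q')=(k-k_1)\Tr(P)=\O$ because $m_1\mid(k-k_1)$; here I use exactness at $E(\Fqn)$, i.e.\ $T_n=\ker(\Tr)$. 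Solving the DLP $Q'=tP'$ in $T_n$ recovers $t$ modulo $m_2:=\mathrm{ord}(P')=N/m_1$, whence $k=k_1+m_1t$ is determined modulo $m_1m_2=N=\mathrm{ord}(P)$ and is thus fixed completely. Each reduction uses only a constant number of group operations, so the three problems have the same complexity up to polynomial factors.

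The one subtlety requiring care is the order bookkeeping behind the kernel membership of $P'$ and $Q'$: one must verify $m_1\mid N$ and $\mathrm{ord}(m_1P)=N/m_1$, so that the partial data (the residue of $k$ modulo $m_1$ and that of $t$ modulo $m_2$) recombine to determine $k$ modulo the full order $N$. Because $m_1$ divides $N$, this is precisely the digit-extraction step of the Pohlig--Hellman reduction for the factorization $N=m_1\cdot(N/m_1)$, and no coprimality assumption on $|T_n|$ and $|E(\Fq)|$ is required. This is the only place where exactness of the sequence is genuinely used; the subgroup direction is purely formal.
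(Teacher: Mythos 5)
Your proof is correct and takes essentially the same route as the paper: the paper (citing Granger--Vercauteren for the explanation and Galbraith--Smith for the formal argument) sketches exactly this Pohlig--Hellman-style reduction, where the trace projects the DLP onto $E(\Fq)$ to determine $k$ modulo the order of $\Tr(P)$, and the remaining modular information is recovered from a DLP in the kernel $T_n$. Your write-up merely makes explicit the order bookkeeping ($m_1 \mid N$, $\mathrm{ord}(m_1P)=N/m_1$) that the paper delegates to the cited references.
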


In the conclusions of \cite{avanzi-cesena-07}, large bandwidth is mentioned as the only drawback of using trace zero subgroups in pairing-based cryptography. In this paper we solve this problem by finding an optimal representation for the elements of the trace zero subgroup. 

\begin{definition}
Let $G$ be a finite set. A {\it representation} for the elements of $G$ is a bijection between $G$ and a set of binary strings of fixed length $\ell$. Equivalently, it is an injective map from $G$ to $\F_2^{\ell}$.
A representation is {\em optimal} if $|G|\sim 2^{\ell}$, i.e.\ if we need approximately $\log_2 |G|$ bits to represent an element of $G$.

Abusing terminology, in this paper we call representation a map from $G$ to $\F_2^{\ell}$ with the property that an element of $\F_2^{\ell}$ has at most $d$ inverse images, for some small fixed $d$. In this case, we say that the representation {\it identifies} classes of at most $d$ elements, namely those that have the same representation. Notice that the number of classes is about $|G|/d \sim |G|$, if $d$ is a small constant.
\end{definition}

\begin{remark}
Since the elements of a finite field $\Fq$ can be represented via binary strings of length $\log_2 q$, a representation for $G$ can be given via a bijection between $G$ and a subset of $\F_q^m$, for some $m$ and some prime power $q$. Such a representation is optimal if and only if $|G|\sim q^m$. 
\end{remark}

Representing points of an elliptic curve via their $x$-coordinate is a standard example of optimal representation.

\begin{example}
It is customary to represent a point $(X,Y)\in E(\Fqn)$ via its $x$-coordinate $X\in\Fqn$. The $y$-coordinate can then be recovered, up to sign, from the curve equation. If desired, the sign can be stored in one extra bit of information. Such a representation is optimal, since by Hasse's Theorem 
$|E(\Fqn)|\sim q^n$.
\end{example}

The representation from the previous example identifies pairs of points, since $P$ and $-P$ have the same $x$-coordinate. We often say that the $x$-coordinate is a representation for the {\it equivalence class} consisting of $P$ and $-P$. The representation that we propose in this paper identifies a small number of points as well. Before we discuss our representation, we notice that representing a point $P\in T_n$ via its $x$-coordinate is no longer optimal.

\begin{remark}\label{rmk:nonopt}
Since a point $P=(X,Y)\in T_n$ is an element of $E(\Fqn)$, we can represent $P$ via $X\in\Fqn$. Choosing an $\Fq$-basis of $\Fqn$, we can represent $X\in\Fqn$ as an $n$-tuple $(X_0,\ldots,X_{n-1})\in\F_q^n$. 
Representing $P\in T_n$ as $X\in\Fqn$ or as $(X_0,\ldots,X_{n-1})\in\F_q^n$ however is not optimal, since $|T_n|\sim q^{n-1}$.
\end{remark}

In this paper we find a representation for the elements of $T_n$, via $n-1$ coordinates in $\Fq$. Our representation is not injective, but it identifies a small number of points. Our approach is the following: We start from the representation of $P\in T_n$ as an $n$-tuple $\rho(P)=(X_0,\ldots,X_{n-1})\in\F_q^n$, and write an equation in $\Fq[x_0,\ldots,x_{n-1}]$ which vanishes on $\rho(P)$ for all $P\in T_n$. This allows us to drop one coordinate of $\rho(P)$ and reconstruct it using the equation.
Therefore, we can represent elements of $T_n$ via $n-1$ coordinates in $\Fq$, which is optimal. 

We now fix some notation that we will use when writing explicit equations in Sections~\ref{sec:eqn2}, \ref{sec:eqnsdegree3}, and \ref{sec:eqnsdegree5}.
Let $\Fq$ be the finite field with $q$ elements, $n$ a prime. 
For the sake of concreteness, we assume that $n\mid q-1$. Due to its simplicity, we always consider this case when writing explicit equations. All of our arguments however work for any $n$ and $q$, see also Remark~\ref{rmk:anynq}. If $n\mid q-1$, thanks to Kummer theory we can write the extension field as $$\Fqn = \Fq[\zeta]/(\zeta^n - \mu),$$ where $\mu$ is not an $n$-th power in $\Fq$. Where necessary, we take $1, \zeta, \ldots, \zeta^{n-1}$ as a basis of the field extension.

When doing Weil restriction, we associate $n$ new variables $x_0,\ldots,x_{n-1}$ to the variable $x$. They are related via
\begin{equation}\label{wrcoords}
x =x_0 + x_1 \zeta + \ldots + x_{n-1} \zeta^{n-1}.\end{equation}
We abuse terminology and use the term {\it Weil restriction} not only for the variety, but also for the process of 
writing equations for the Weil restriction. In particular for us, Weil restriction is a procedure that can be applied to a polynomial defined over $\Fqn$ and results in $n$ polynomials with $n$ times as many variables, and coefficients in $\Fq$.

\begin{remark}\label{rmk:anynq}
If $n$ does not divide $q-1$, we choose a normal basis $\{\alpha,\alpha^q,\ldots,\alpha^{q^{n-1}}\}$ of $\Fqn$ over $\Fq$ and Weil restriction coordinates 
$$x = x_0\alpha + x_1 \alpha^q + \ldots + x_{n-1} \alpha^{q^{n-1}}.$$
\end{remark}

It is easy to show that the case $n=2$ allows a trivial optimal representation for the elements of $T_n$. Hence in the next sections we concentrate on the more interesting case of {\it odd} primes $n$.

\begin{proposition}\label{prop:n=2}
The trace zero subgroup $T_2$ of $E(\F_{q^2})$ can be described as
$$T_2=\{(X,Y)\in E(\F_{q^2}) \mid X \in \Fq, \;Y\notin\Fq\}\cup E[2](\Fq).$$
In particular, representing a point $(X,Y)\in T_2$ by $X \in \Fq$ yields a representation of optimal size. 
\end{proposition}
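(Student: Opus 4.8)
The plan is to translate the trace-zero condition into conditions on the affine coordinates and then read off the stated description. Since $n=2$, the trace map sends $P$ to $P+\varphi(P)$, so $P\in T_2$ precisely when $\varphi(P)=-P$. Writing $P=(X,Y)$ and using $\varphi(P)=(X^q,Y^q)$ together with $-P=(X,-Y)$ for the standard model, the condition $\varphi(P)=-P$ is equivalent to the pair of equations $X^q=X$ and $Y^q=-Y$. The whole proof then amounts to analyzing this system, plus a short count for optimality.

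First I would establish the forward inclusion. The equation $X^q=X$ says exactly that $X\in\Fq$. For the second equation I would split on whether $Y=0$. If $Y=0$, then $P=(X,0)$ is a $2$-torsion point whose coordinates lie in $\Fq$, hence $P\in E[2](\Fq)$; the point $\O$ is handled trivially and also lies in $E[2](\Fq)$. If $Y\ne 0$, then $Y^q=-Y$ forces $Y\notin\Fq$, since $Y\in\Fq$ would give $Y=Y^q=-Y$, i.e.\ $2Y=0$ and hence $Y=0$, a contradiction. This shows $T_2$ is contained in the right-hand set.

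For the converse I would argue by Galois descent on the $y$-coordinate. Suppose $(X,Y)\in E(\F_{q^2})$ with $X\in\Fq$ and $Y\notin\Fq$. Then $Y^2=X^3+\dots\in\Fq$, so $Y$ is a root of a monic degree-two polynomial $f(t)=t^2-Y^2\in\Fq[t]$. Since $Y\notin\Fq$, the polynomial $f$ has no root in $\Fq$ and is therefore irreducible, so its two roots are the Galois conjugates $Y$ and $Y^q$. But its roots are also $\pm Y$, and $Y^q\ne Y$ forces $Y^q=-Y$; hence $\varphi(P)=-P$ and $P\in T_2$. The inclusion $E[2](\Fq)\subseteq T_2$ is immediate, as such points are fixed by $\varphi$ and equal their own negatives. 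Combining the two inclusions yields the claimed set equality.

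Finally, for optimality I would count $|T_2|$. Using the exact sequence $0\to T_2\to E(\F_{q^2})\overset{\Tr}{\longrightarrow} E(\Fq)\to 0$ one gets $|T_2|=|E(\F_{q^2})|/|E(\Fq)|$. Writing the eigenvalues of Frobenius as $\alpha,\bar\alpha$ with $\alpha+\bar\alpha=t$ and $\alpha\bar\alpha=q$, a short computation gives $|T_2|=(1+\alpha)(1+\bar\alpha)=q+1+t$, which is $\sim q$ by Hasse's bound $|t|\le 2\sqrt{q}$. Since the map $P\mapsto X$ has fibres of size at most two (the pair $\{P,-P\}$), representing $P$ by $X\in\Fq$ costs one $\Fq$-element, matching $\log_2|T_2|\sim\log_2 q$, so the representation is optimal. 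I expect the main subtlety to be the converse direction, specifically the Galois argument that $Y\notin\Fq$ together with $Y^2\in\Fq$ forces $Y^q=-Y$, along with cleanly disposing of the degenerate cases $Y=0$ and $P=\O$; the counting is then routine from the eigenvalue factorization.
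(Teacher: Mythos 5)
Your proof is correct, and its skeleton is the same as the paper's: both prove the two inclusions, starting from the observation that $P\in T_2$ means $\varphi(P)=-P$. The one step where you genuinely diverge is the converse inclusion for points with $X\in\Fq$ and $Y\notin\Fq$. The paper argues via the group law: $(X,Y)$ and $\varphi(X,Y)=(X,Y^q)$ are distinct points of $E$ lying on the vertical line $x-X=0$, hence they sum to $\O$. You instead argue by Galois theory: $Y^2\in\Fq$, so $t^2-Y^2\in\Fq[t]$ is irreducible, its two roots $\pm Y$ must be the Galois conjugates $Y,Y^q$, forcing $Y^q=-Y$. Both arguments are sound, but they differ in scope: the paper's vertical-line argument nowhere uses the formula $-P=(X,-Y)$, so it works verbatim for an arbitrary Weierstra\ss{} equation in any characteristic, while your coordinate reduction ($\varphi(P)=-P$ iff $X^q=X$ and $Y^q=-Y$), your step $2Y=0\Rightarrow Y=0$, and your polynomial $t^2-Y^2$ all presuppose the short model $y^2=x^3+Ax+B$ in odd characteristic --- the setting the paper adopts from Section~\ref{sec:eqn1} onward, but not needed for this proposition. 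On the other hand, your explicit count $|T_2|=(1+\alpha)(1+\bar\alpha)=q+1+t\sim q$ makes the optimality claim precise where the paper leaves it implicit; just note that this count uses the surjectivity of the trace map, which is granted by the exact sequence stated (without proof) in the paper's earlier proposition.
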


\begin{proof}
We first prove that $T_2$ is contained in the union of sets on the right hand side of the equality. 
Let $P\in T_2$, $P\neq\O$, so $P=(X,Y)\in E(\F_{q^2})$. If $P\in E(\Fq)$, then $2P=\O$, hence $P\in E[2](\Fq)$. If $P\notin E(\Fq)$, then $(X,Y)=-(X^q,Y^q)$. In particular $X=X^q$, so $X\in\Fq$, which also implies $Y\notin\Fq$.

To prove the other inclusion, observe that by definition $P\in E[2](\Fq)$ satisfies $2P=\O$, so $P\in T_2$. Let $P=(X,Y)\in E(\F_{q^2})$ with $X\in\Fq$, $Y\notin\Fq$. Since $X\in\Fq$, the points $(X,Y)$ and $\varphi(X,Y)=(X,Y^q)$ are distinct points on $E$ which lie on the same vertical line $x-X=0$. Hence $(X,Y)+\varphi(X,Y)=\O$ and $(X,Y)\in T_2$. 
\end{proof}

The next proposition will be useful when writing equations for the $\Fq$-rational points of the trace zero variety. 
For a multivariate polynomial $h$, we denote by $\deg_{x_i}(h)$ the degree of $h$ in the variable $x_i$.

\begin{proposition}\label{prop:zeros}
 Let $h \in \Fq[x_0,\ldots,x_{n-1}]$ be a polynomial with $h(X_0,\ldots,X_{n-1}) = 0$ for all $(X_0,\ldots,X_{n-1}) \in \F_q^n$, and assume that $\deg_{x_i}(h) < q$ for $i \in \{0,\ldots,n-1\}$. Then $h$ is the zero polynomial.
\end{proposition}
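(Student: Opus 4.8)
The plan is to prove this by induction on the number of variables $n$. This is a standard multivariate generalization of the fact that a univariate polynomial over a field has at most as many roots as its degree.

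The plan is to prove this by induction on the number of variables $n$, reducing repeatedly to the one-variable case. The underlying principle is the familiar fact that a nonzero univariate polynomial over a field of degree strictly less than $q$ cannot have $q$ distinct roots.

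For the base case $n = 1$, I would observe that $h \in \Fq[x_0]$ with $\deg(h) < q$ vanishes on all $q$ elements of $\Fq$. Since a nonzero polynomial of degree $< q$ has at most $\deg(h) < q$ roots, the polynomial $h$ must be identically zero.

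For the inductive step, I would isolate the last variable and write
$$h = \sum_{j=0}^{d} h_j(x_0,\ldots,x_{n-2})\, x_{n-1}^j,$$
where $d = \deg_{x_{n-1}}(h) < q$ and each coefficient $h_j$ lies in $\Fq[x_0,\ldots,x_{n-2}]$ with $\deg_{x_i}(h_j) \le \deg_{x_i}(h) < q$. The key step is then to fix an arbitrary point $(X_0,\ldots,X_{n-2}) \in \F_q^{n-1}$ and consider $h(X_0,\ldots,X_{n-2},x_{n-1})$ as a univariate polynomial in $x_{n-1}$ of degree $< q$. By hypothesis it vanishes for every value of $x_{n-1}$ in $\Fq$, so by the base-case argument it is the zero polynomial; hence $h_j(X_0,\ldots,X_{n-2}) = 0$ for each $j$. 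Since the chosen point was arbitrary, each $h_j$ vanishes on all of $\F_q^{n-1}$ and satisfies the degree bound, so the inductive hypothesis forces $h_j = 0$ for all $j$, and therefore $h = 0$.

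I do not expect a genuine obstacle here, as the argument is entirely routine. The only point requiring a little care is ensuring that the degree bounds are preserved when passing to the coefficients $h_j$, so that the inductive hypothesis applies; this holds because $\deg_{x_i}(h_j) \le \deg_{x_i}(h) < q$ for every $i \in \{0,\ldots,n-2\}$. The necessity of the hypothesis $\deg_{x_i}(h) < q$ is also worth noting, since over a finite field the polynomial $x_i^q - x_i$ vanishes identically on $\F_q^n$ without being zero.
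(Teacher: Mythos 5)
Your proof is correct, and it takes a genuinely different route from the paper's. You argue by induction on the number of variables: specializing the first $n-1$ coordinates at an arbitrary point of $\F_q^{n-1}$ reduces the claim to the univariate fact that a nonzero polynomial of degree $<q$ cannot vanish at all $q$ elements of $\Fq$, and the coefficient polynomials $h_j$ inherit the degree bounds, so the induction closes. The paper instead proves a stronger, structural statement: the full vanishing ideal of $\F_q^n$ is $I(\F_q^n) = (x_0^q-x_0,\ldots,x_{n-1}^q-x_{n-1})$ (itself shown by induction, via intersections of maximal ideals), and then observes that these generators form a Gr\"obner basis because their leading terms $x_i^q$ are pairwise coprime; since $\deg_{x_i}(h)<q$ for all $i$, the polynomial $h$ is already fully reduced with respect to this basis, so membership in the ideal forces $h=0$. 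Your argument is more elementary, requiring nothing beyond univariate root counting, and would be the natural choice in isolation. The paper's argument buys the explicit identification of the vanishing ideal, which is conceptually aligned with the rest of the paper: the authors repeatedly reduce Weil restriction equations modulo $x_i^q-x_i$, and knowing that these polynomials generate the whole ideal of relations on $\F_q^n$ is exactly what justifies that procedure. Your closing remark that $x_i^q-x_i$ shows the degree hypothesis is necessary is apt, and in fact those same polynomials are the protagonists of the paper's proof.
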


\begin{proof}
 Write 
 $$V(h) = \{(X_0,\ldots,X_{n-1}) \in \Fqbar^{\,n} \mid h(X_0,\ldots,X_{n-1}) = 0\}\subseteq\Fqbar^{\,n}$$
 for the zero locus of $h$ over the algebraic closure of $\Fq$ and 
 $$I(V) = \{f \in \Fq[x_0,\ldots,x_{n-1}] \mid f(X_0,\ldots,X_{n-1}) = 0 \text{ for all } (X_0,\ldots,X_{n-1}) \in V \}$$ 
for the ideal of the polynomials vanishing on some $V \subseteq \Fqbar^{\,n}$.
 
First we show that $I(\F_q^n) = J_n$ where $J_n = (x_0^q-x_0,\ldots,x_{n-1}^q-x_{n-1})$. We proceed by induction on $n$. The claim holds for $n=1$, since the elements of $\Fq$ are exactly those elements of $\Fqbar$ that satisfy the equation $x_0^q-x_0$. Assuming that the statement is true for $n-1$, we have
\begin{eqnarray*}
 I(\F_q^n) & = & \bigcap_{(\alpha_0,\ldots,\alpha_{n-1}) \in \F_q^n} (x_0-\alpha_0,\ldots,x_{n-1}-\alpha_{n-1})\\
 & = & \bigcap_{\alpha_0 \in \Fq} \bigcap_{(\alpha_1,\ldots,\alpha_{n-1})\in\F_q^{n-1}}(x_0-\alpha_0,\ldots,x_{n-1}-\alpha_{n-1})\\
  & = & \bigcap_{\alpha_0 \in \Fq} (x_0-\alpha_0,x_1^q-x_1,\ldots,x_{n-1}^q-x_{n-1})\\
  & = & \left(\prod_{\alpha_0 \in \Fq} (x_0-\alpha_0),x_1^q-x_1,\ldots,x_{n-1}^q-x_{n-1}\right)\\
  & = & J_n.
\end{eqnarray*}
 
Now we show that $h=0$. Since $h$ vanishes on $\F_q^n$, we have $\F_q^n \subseteq V(h) \subseteq \Fqbar^{\,n}$, 
which implies $h\in I(V(h)) \subseteq I(\F_q^n) = J_n$. 
The leading terms of $x_0^q-x_0,\ldots,x_{n-1}^q-x_{n-1}$ with respect to any term order are $x_0^q,\ldots,x_{n-1}^q$, 
in particular they are pairwise coprime. Hence the polynomials $x_0^q-x_0,\ldots,x_{n-1}^q-x_{n-1}$ are a Gr\"obner basis of $J_n$. 
Therefore, $h\in J_n$ implies that $h$ reduces to zero using the generators of $J_n$, i.e.\ if we divide $h$ by $x_i^q-x_i$ 
whenever the leading term of $h$ is divisible by $x_i^q$, we must obtain remainder zero when no more division is possible. 
But since $\deg_{x_i}(h) < q$ for all $i$, $h$ is equal to the remainder of the division of $h$ by $x_0^q-x_0,\ldots,x_{n-1}^q-x_{n-1}$, hence $h = 0$.
\end{proof}

\section{An equation for the trace zero subgroup} \label{sec:eqn1}

In this section we use Semaev's summation polynomials \cite{semaev-04} to write an equation for the set of $\Fq$-rational points of the trace zero variety. The equation involves the $x$-coordinates only and will help us in finding a better representation for the elements of the trace zero subgroup. 

Semaev introduced the summation polynomials in the context of attacking the elliptic curve discrete logarithm problem. They give
polynomial conditions describing when a number of points on an elliptic curve sum to $\O$, involving only the $x$-coordinates of the points.

\begin{definition}\label{def:semaev}
Let $\Fq$ be a finite field of characteristic different from $2$ and $3$ and let $E$ be a smooth elliptic curve defined by the affine equation
$$E : y^2 = x^3 + Ax + B,$$ with coefficients $A, B \in \Fq$.

Define the $m$-th summation polynomial $f_m$ recursively by
$$\begin{array}{rcl}
f_3(z_1,z_2,z_3) & = & (z_1 - z_2)^2z_3^2 - 2((z_1+z_2)(z_1z_2 + A) + 2B)z_3 + (z_1z_2-A)^2 - 4B(z_1+z_2) \\
f_m(z_1,\ldots,z_m) & = & \Res_z(f_{m-k}(z_1,\ldots,z_{m-k-1},z),f_{k+2}(z_{m-k},\ldots,z_m,z)) 
\end{array} $$
for $m \geq 4$ and $m-3 \geq k \geq 1$. 
\end{definition}

We briefly recall the properties of summation polynomials that we will need.

\begin{theorem}[\cite{semaev-04}, Theorem~1]\label{thm:semaev}
For any $m \geq 3$, let $Z_1,\ldots, Z_m$ be elements of the algebraic closure $\Fqbar$ of $\Fq$. Then $f_m(Z_1,\ldots,Z_m) = 0$ if and only if there exist $Y_1,\ldots,Y_m \in \Fqbar$ such that the points $(Z_i,Y_i)$ are on $E$ and $(Z_1,Y_1) + \ldots + (Z_m,Y_m) = \O$ in the group $E(\Fqbar)$. Furthermore, $f_m$ is absolutely irreducible and symmetric of degree $2^{m-2}$ in each variable. The total degree is $(m-1)2^{m-2}$.
\end{theorem}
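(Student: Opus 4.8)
The plan is to prove the summation property, the degree formula, and symmetry together by induction on $m$, and to treat absolute irreducibility by a separate geometric argument once the summation property is available. For the base case $m=3$ I would verify the summation property by hand: three points $(Z_i,Y_i)$ on $E$ add to $\O$ exactly when they are collinear, so substituting a line $y=\ell x+t$ into the curve gives the cubic $x^3-\ell^2x^2+(A-2\ell t)x+(B-t^2)$ whose roots must be $Z_1,Z_2,Z_3$. Comparing with Vieta's relations and eliminating $\ell,t$ through $\ell^2=\sum_iZ_i$, $t^2=\prod_iZ_i+B$, and $2\ell t=A-\sum_{i<j}Z_iZ_j$ yields the symmetric relation $(A-\sum_{i<j}Z_iZ_j)^2=4(\sum_iZ_i)(\prod_iZ_i+B)$, which expands to $f_3$; this simultaneously exhibits $f_3$ as symmetric, of degree $2=2^{3-2}$ in each variable, and of total degree $4=(3-1)2^{3-2}$.

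For the inductive step I would write $f_m=\Res_z(g,h)$ with $g=f_{m-k}(z_1,\dots,z_{m-k-1},z)$ and $h=f_{k+2}(z_{m-k},\dots,z_m,z)$, noting that both indices $m-k$ and $k+2$ are strictly smaller than $m$, so the induction hypothesis applies. The resultant vanishes at a point precisely when $g$ and $h$ share a root in $z$ over $\Fqbar$ (up to the leading-coefficient caveat below). Given a common root $Z$, the hypothesis yields points with $x$-coordinates $Z_1,\dots,Z_{m-k-1},Z$ summing to $\O$ and points with $x$-coordinates $Z_{m-k},\dots,Z_m,Z$ summing to $\O$; writing the two partial sums as $(Z,-W)$ and $(Z,-W')$ and using that the two points over $Z$ differ only in sign---and that negating all $y$-coordinates in one group is free---I would arrange $W'=-W$ and conclude that all $m$ points sum to $\O$ (the $2$-torsion case $W=0$ is immediate). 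Conversely, splitting $m$ points that sum to $\O$ into the two groups and taking $Z$ to be the $x$-coordinate of the partial sum produces the common root. The per-variable degree follows from the Sylvester-determinant bound: each $z_i$ occurs in only one of $g,h$, so $\deg_{z_i}f_m\le 2^{\,k}\cdot 2^{\,m-k-2}=2^{\,m-2}$; I would track the total degree through the same bookkeeping, checking that the claimed top terms survive to give $(m-1)2^{m-2}$.

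Absolute irreducibility I would establish geometrically. The set $S_m=\{(P_1,\dots,P_m)\in E^m : P_1+\dots+P_m=\O\}$ is isomorphic to $E^{m-1}$ via its first $m-1$ coordinates, hence irreducible of dimension $m-1$, and by the summation property $V(f_m)$ is exactly the image of $S_m$ under $(P_i)\mapsto(x(P_1),\dots,x(P_m))$. Thus $V(f_m)$ is irreducible of dimension $m-1$, i.e.\ an irreducible hypersurface with reduced equation $g$, and $f_m=c\,g^{e}$. To force $e=1$ I would count the generic fiber: for generic $Z_1,\dots,Z_{m-1}$ the $2^{m-1}$ sign choices for $Y_1,\dots,Y_{m-1}$ determine $P_m=-\sum_{i<m}P_i$, and since flipping all signs fixes $x(P_m)$ these give exactly $2^{m-2}$ distinct values of $Z_m$; so the reduced equation satisfies $\deg_{z_m}g=2^{m-2}$, and combined with $\deg_{z_m}f_m\le 2^{m-2}$ and $f_m=c\,g^{e}$ this forces $e=1$, so $f_m$ is absolutely irreducible (and $\deg_{z_m}f_m=2^{m-2}$). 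Symmetry then follows cheaply: for a permutation $\sigma$ the polynomial $f_m^{\sigma}$ vanishes on the permutation-invariant set $V(f_m)$ with the same degree, so $f_m^{\sigma}=c_\sigma f_m$ by irreducibility, and comparing one leading coefficient gives $c_\sigma=1$.

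I expect the delicate point to be the resultant step rather than any single computation. A resultant can vanish because of a common root at infinity or because a leading coefficient in $z$ degenerates, not only because of a genuine finite common root, so turning the ``if and only if'' into an exact statement (not merely a generic one) requires controlling the leading coefficients of $g$ and $h$ in $z$ and excluding spurious vanishing; this is also what underlies the claim that no degree drop occurs. The fiber count for irreducibility is the second substantial step, but it becomes routine once the identification of $V(f_m)$ with the image of $S_m$ is in place.
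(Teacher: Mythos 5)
You should first be aware that the paper does not prove this statement at all: it is quoted verbatim from Semaev ([\cite{semaev-04}, Theorem~1]) and used as a black box, so there is no in-paper argument to compare yours against. Judged on its own terms, your outline follows what is essentially the classical route --- base case $m=3$ via collinearity and Vieta, induction through the resultant with the sign-flipping trick, and absolute irreducibility by identifying $V(f_m)$ with the $x$-image of the irreducible variety $S_m\cong E^{m-1}$ --- and the base case, the sign-flip bookkeeping, and the per-variable degree bound $\deg_{z_i}f_m\le 2^{m-2}$ are correct as written.

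However, two genuine gaps remain, both sitting exactly where you locate the delicate point, and your proposed fix for the second one is misdirected. First, in the direction ``points exist $\Rightarrow f_m=0$'' you take $Z$ to be the $x$-coordinate of a partial sum; this is undefined when that partial sum is $\O$ (e.g.\ $P_1+\dots+P_{m-k-1}=\O$, which forces the other group to sum to $\O$ as well). This case is repairable: the locus where it occurs is a proper closed subset of the irreducible variety $S_m$, so vanishing of $f_m\circ x$ on its complement propagates by density; alternatively, the set $X_m$ of liftable tuples is closed in affine space, being the finite-coordinate part of the image of the projective variety $S_m$ under the morphism to the $m$-fold product of projective lines ($x(\O)=\infty$). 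Second, and more seriously, in the direction ``$f_m=0\Rightarrow$ points exist'' you plan to \emph{exclude} spurious vanishing coming from degenerating leading coefficients. That cannot be done: whenever the leading coefficients in $z$ of both $g$ and $h$ vanish, the first column of the Sylvester matrix is zero, so this locus $B$ is unavoidably contained in $V(f_m)$, and it is nonempty --- for $m=4$, $k=1$ it is $\{z_1=z_2\}\cap\{z_3=z_4\}$, since the leading coefficient of $f_3(z_1,z_2,z)$ in $z$ is $(z_1-z_2)^2$. What must be proved instead is that $B$ consists of liftable tuples. This requires an extra inductive ingredient, e.g.\ that the leading coefficient of $f_n$ in its last variable vanishes at $(Z_1,\dots,Z_{n-1})$ only if those coordinates lift to affine points summing to $\O$; granting this, on $B$ both groups lift separately to sums equal to $\O$, and their union is a lift for the whole tuple. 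Until this is supplied the ``only if'' direction is incomplete, and since your irreducibility argument and the exact degree statements are all derived from the set-theoretic identity $V(f_m)=X_m$, they inherit the gap. (Smaller deferred items: the genericity claim that the $2^{m-2}$ values of $x(P_m)$ in your fiber count are pairwise distinct, and the exact total degree $(m-1)2^{m-2}$, also still need arguments.)
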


\begin{remark}
Definition~\ref{def:semaev} is the original definition that Semaev gave in \cite{semaev-04}. Semaev polynomials can be defined and computed also over a finite field of characteristic $2$ or $3$. Although the formulas look different, the properties are analogous to those stated in Theorem~\ref{thm:semaev}. Hence all the results that we prove in this paper hold, with the appropriate adjustments, over a finite field of any charasteristic.
\end{remark}

Since the points in $T_n$ are characterized by the condition that their Frobenius conjugates sum to zero, we can use the Semaev polynomial to give an equation only in $x$. It is clear that $(X,Y) \in T_n$ implies $f_n(X,X^q,\ldots,X^{q^{n-1}}) = 0$. The opposite implication has some obvious exceptions.

\begin{lemma}\label{lemma:eqn}
For any prime $n$, let $T_n$ denote the trace zero subgroup associated with the field extension $\Fqn|\Fq$. We have
$$\bigcup_{k=0}^{\lfloor\frac{n}{2}\rfloor-1} (E[n-2k](\F_q)+E[2]\cap T_n)\subseteq  \{ (X,Y) \in E(\F_{q^n}) \mid f_n(X,X^q,\ldots,X^{q^{n-1}}) = 0 \} 
\cup \{\O\}.$$
\end{lemma}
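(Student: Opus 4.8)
The plan is to invoke Semaev's characterization (Theorem~\ref{thm:semaev}) to translate the vanishing of $f_n(X,X^q,\ldots,X^{q^{n-1}})$ into the existence of a suitable sum of points on $E$. Concretely, for an affine point $P=(X,Y)\in E(\F_{q^n})$, the quantity $f_n(X,X^q,\ldots,X^{q^{n-1}})$ vanishes if and only if there are signs $\epsilon_0,\ldots,\epsilon_{n-1}\in\{+1,-1\}$ with $\sum_{i=0}^{n-1}\epsilon_i\varphi^i(P)=\O$: indeed, the only two points of $E$ with first coordinate $X^{q^i}$ are $\pm\varphi^i(P)=(X^{q^i},\pm Y^{q^i})$, so the choice of $Y_i$ in Theorem~\ref{thm:semaev} is exactly the choice of $\epsilon_i$. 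The whole statement thus reduces to producing, for each point in the union on the left, a choice of signs making this Frobenius-twisted sum vanish.

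Next I would fix a point in $E[n-2k](\F_q)+(E[2]\cap T_n)$ and write it as $P=Q+R$ with $Q\in E(\F_q)$ satisfying $(n-2k)Q=\O$, and $R\in E[2]\cap T_n$. If $P=\O$ it is covered by the extra $\{\O\}$ on the right, so I may assume $P$ is affine. Since $Q$ is $\F_q$-rational we have $\varphi^i(Q)=Q$, hence $\varphi^i(P)=Q+\varphi^i(R)$ and, for any choice of signs,
$$\sum_{i=0}^{n-1}\epsilon_i\varphi^i(P)=\Big(\sum_{i=0}^{n-1}\epsilon_i\Big)Q+\sum_{i=0}^{n-1}\epsilon_i\varphi^i(R).$$
The argument then controls the two summands separately.

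The $R$-part is handled by the two defining properties of $R$: because $R$, and hence each $\varphi^i(R)$, is $2$-torsion, the signs are invisible, $\epsilon_i\varphi^i(R)=\varphi^i(R)$; and because $R\in T_n$, the sum $\sum_{i=0}^{n-1}\varphi^i(R)=\Tr(R)=\O$. So the second summand vanishes for \emph{every} choice of signs, and all the sign freedom can be spent on the $Q$-part. There I would choose exactly $k$ of the $\epsilon_i$ equal to $-1$, so that $\sum_i\epsilon_i=n-2k$ and the first summand equals $(n-2k)Q=\O$. This is legitimate since $0\le k\le\lfloor n/2\rfloor-1<n$, so there really are $k$ indices available to flip. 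Combining the two summands gives $\sum_i\epsilon_i\varphi^i(P)=\O$, which by the translation above yields $f_n(X,X^q,\ldots,X^{q^{n-1}})=0$.

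The computation is routine once the reduction is in place; the only real point to get right is the interpretation of Theorem~\ref{thm:semaev} in terms of sign choices together with the observation that $2$-torsion renders those signs irrelevant in the $R$-part. I would additionally verify the boundary cases, namely $k=0$ (no flips, using $nQ=\O$) and $P=\O$, the latter being exactly why the $\{\O\}$ term appears on the right, since $\O$ has no $x$-coordinate to feed into $f_n$.
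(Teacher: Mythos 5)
Your proof is correct and follows essentially the same route as the paper: decompose $P=Q+R$, use that $2$-torsion makes the signs on the $\varphi^i(R)$ irrelevant and that $\Tr(R)=\O$, and choose signs summing to $n-2k$ so the $Q$-part is killed by $(n-2k)Q=\O$, then conclude via Theorem~\ref{thm:semaev}. The only cosmetic differences are that you make the reduction to a signed Frobenius sum explicit and allow an arbitrary choice of $k$ minus signs, whereas the paper uses the specific pattern of $n-2k$ plus signs followed by $2k$ alternating signs.
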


\begin{proof}
 Let $k \in \{0,\ldots,\lfloor \frac{n}{2} \rfloor \}$, and let $P = Q + R$ with $Q \in E[n-2k](\Fq), R \in E[2] \cap T_n$. Then we have
 \begin{eqnarray*}
  && \underbrace{P + \varphi(P) + \ldots + \varphi^{n-2k-1}(P)}_{n - 2k \text{ summands}} + \underbrace{\varphi^{n-2k}(P) - \varphi^{n-2k+1}(P) + \ldots - \varphi^{n-1}(P)}_{2k \text{ summands with alternating signs}}\\
  & = & \underbrace{Q+\ldots+ Q}_{n-2k \text{ summands}} + \underbrace{Q - Q + \ldots - Q}_{2k \text{ summands with alternating signs}} + R + \varphi(R) + \ldots + \varphi^{n-1}(R)\\
  & = & (n-2k)Q + \Tr(R)\\
  & = & \O,
 \end{eqnarray*}
where for the first equality we use that $Q \in E(\Fq)$ and $R \in E[2]$, and for the third equality we use that $Q \in E[n-2k]$ and $R \in T_n$. 
\end{proof}

Notice that the points of the form $P=Q+R$ with $Q\in E[n-2k](\F_q)$ and $R\in E[2]\cap T_n$ 
are not trace zero points if $Q\neq\O$ and $3\leq n-2k\leq n-2$.
For the interesting cases $n=3$ and $5$ we prove that these are the only exceptions. 

\begin{proposition}\label{prop:eqn}
 Let $T_n$ be the trace zero subgroup associated with the field extension $\Fqn|\Fq$. We have
 $$\begin{array}{lcl}
    T_3 & = & \{ (X,Y) \in E(\F_{q^3}) \mid f_3(X,X^q,X^{q^2}) = 0\} \cup \{\O\}  \\
    T_5 \cup (E[3](\F_q)+E[2]\cap T_5) & = & \{ (X,Y) \in E(\F_{q^5}) \mid f_5(X,X^q,\ldots,X^{q^4}) = 0 \} \cup \{\O\}.
 \end{array}$$
\end{proposition}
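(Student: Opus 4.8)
The plan is to prove each of the two set equalities by establishing both inclusions. By Lemma~\ref{lemma:eqn}, the $\supseteq$ direction of the first identity and the $\subseteq$ direction (i.e. the left-to-right containment of the union into the Semaev locus) of the second identity are already in hand, so the real work is the reverse containment: given a point $(X,Y)\in E(\F_{q^n})$ with $f_n(X,X^q,\ldots,X^{q^{n-1}})=0$, I must show that $P=(X,Y)$ is a trace zero point (for $n=3$), respectively that $P$ lies in $T_5\cup(E[3](\F_q)+E[2]\cap T_5)$ (for $n=5$). The key tool is Theorem~\ref{thm:semaev}: the vanishing of $f_n$ at $(X,X^q,\ldots,X^{q^{n-1}})$ means exactly that there is a choice of signs $\varepsilon_0,\ldots,\varepsilon_{n-1}\in\{\pm1\}$ such that $\sum_{i=0}^{n-1}\varepsilon_i\varphi^i(P)=\O$, because the points on $E$ with $x$-coordinate $X^{q^i}$ are precisely $\pm\varphi^i(P)=(X^{q^i},\pm Y^{q^i})$.

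\medskip

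First I would set up this sign relation carefully. Applying $\varphi$ to the relation $\sum_i\varepsilon_i\varphi^i(P)=\O$ and using $\varphi^n=\mathrm{id}$ on $E(\F_{q^n})$ gives a second relation $\sum_i\varepsilon_{i-1}\varphi^i(P)=\O$ (indices mod $n$); subtracting, and more generally comparing the relation with its Frobenius shifts, lets me derive constraints on $P$. For $n=3$ the plan is a direct case analysis on the sign vector $(\varepsilon_0,\varepsilon_1,\varepsilon_2)$: up to an overall sign (which does not change the relation) and up to the cyclic symmetry induced by $\varphi$, there are only a couple of essentially different patterns, namely all signs equal (giving $\Tr(P)=\O$, i.e. $P\in T_3$ directly) and the mixed pattern $(+,+,-)$ and its conjugates. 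In the mixed case I would show the relation forces $2\varphi^i(P)=\O$ for some $i$ after combining with a Frobenius shift, hence $P\in E[2]$, and then check such a $2$-torsion point lands in $T_3$; the upshot is that every solution lies in $T_3$, matching the fact that for $n=3$ the exceptional sets $E[n-2k](\F_q)+E[2]\cap T_n$ with $3\le n-2k\le n-2$ are empty.

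\medskip

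For $n=5$ the same philosophy applies but the case analysis over sign vectors $(\varepsilon_0,\ldots,\varepsilon_4)$ is larger. Again I may fix an overall sign and exploit the cyclic action of $\varphi$ to reduce to representative patterns according to the number of minus signs. The all-equal pattern yields $P\in T_5$. The patterns with mixed signs should, after taking suitable Frobenius-shifted differences of the relation, force $P$ to satisfy an equation of the form $mQ=\O$ together with a $2$-torsion contribution, i.e. decompose as $P=Q+R$ with $Q\in E[m](\F_q)$ and $R\in E[2]\cap T_5$ for $m\in\{1,3,5\}$; the constraint $m\mid 5$-related arithmetic and the primality of $5$ restrict which $m$ can occur, and the only genuinely new (non-trace-zero) contribution is $m=3$, producing exactly the exceptional set $E[3](\F_q)+E[2]\cap T_5$ that appears in the statement. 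I would organize this by showing that a point $Q\in E(\Fqbar)$ forced to satisfy $Q=\varphi(Q)$ lies in $E(\Fq)$, and tracking how the sign pattern translates into the order constraint on $Q$.

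\medskip

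The main obstacle I anticipate is handling the sign ambiguity cleanly: the vanishing of $f_n$ only guarantees the \emph{existence} of some sign vector making the Frobenius conjugates sum to zero, so I cannot assume the signs are all $+1$, and I must rule out, or correctly interpret, every mixed pattern rather than just the convenient one. The bookkeeping of combining $\sum_i\varepsilon_i\varphi^i(P)=\O$ with its Frobenius shifts to extract the decomposition $P=Q+R$ — and of verifying that the resulting $Q$ genuinely has the claimed order and rationality — is where the argument is most delicate, and it is also where the difference between the clean $n=3$ statement and the $n=5$ statement with its extra exceptional term really comes from.
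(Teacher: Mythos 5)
Your plan is correct and follows essentially the same route as the paper's proof: invoke Semaev's theorem to turn the vanishing of $f_n$ into a relation $\sum_i \varepsilon_i \varphi^i(P) = \O$ with signs $\varepsilon_i \in \{\pm 1\}$, reduce the sign patterns up to overall sign and cyclic shifts, and in each mixed case combine the relation with a Frobenius shift of itself to force either $P \in E[2]$ (hence $P \in T_n$) or, in the single-minus case for $n=5$, the decomposition $P = Q + R$ with $Q \in E[3](\F_q)$ and $R \in E[2] \cap T_5$. The only quibble is attribution of the easy containments $T_n \subseteq \{f_n = 0\} \cup \{\O\}$ to Lemma~\ref{lemma:eqn} (they follow directly from Theorem~\ref{thm:semaev}, as the paper notes before the lemma), but this does not affect the substance of the argument.
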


\begin{proof}
 Let $P = (X,Y) \in E(\F_{q^3})$ with $f_3(X,X^q,X^{q^2}) = 0$. Then by the properties of the Semaev polynomial, there exist $Y_0,Y_1,Y_2 \in \Fqbar$ such that $(X,Y_0) + (X^q,Y_1) + (X^{q^2},Y_2) = \O$. Obviously we have $Y_i = \pm Y^{q^i}, i = 0,1,2$, so $P \pm \varphi(P) \pm \varphi^2(P) = \O$. We have to show that all signs are ``+''. Suppose $P - \varphi(P) + \varphi^2(P) = \O$. By applying $\varphi$, we get $\varphi(P) - \varphi^2(P) + P = \O$. Adding these two equations gives $2P = \O$, implying that $P = -P$, hence $P+\varphi(P)+\varphi^2(P)=\O$. In particular, $P \in T_3$. The rest follows by symmetry.
 
Now let $P = (X,Y) \in E(\F_{q^5})$ with $f_5(X,X^q,\ldots,X^{q^4}) = 0$. Then as before, $P \pm \varphi(P) \pm \varphi^2(P) \pm \varphi^3(P) \pm \varphi^4(P) = \O$. If all signs are ``+'', then $P \in T_5$. We treat all other cases below.
 
 [one minus] Assume $P + \varphi(P) + \varphi^2(P) + \varphi^3(P) -
 \varphi^4(P) = \O$. Applying $\varphi$ to the equation and adding the
 two equations, we get $2\varphi(P) + 2\varphi^2(P) + 2\varphi^3(P) =
 \O$, and by substituting into twice the first equation, $2P =
 \varphi^4(2P)$. Hence $2P \in E(\F_{q^4}) \cap E(\F_{q^5}) =
 E(\F_q)$, so $2P\in E[3](\Fq)$. Now $P = Q + R\in E[6]$ is the sum of $Q\in
 E[3]$ and $R\in E[2]$. We have $Q = -2Q = -2P \in E[3](\Fq)$. From
 the original equation $P + \varphi(P) + \varphi^2(P) + \varphi^3(P) -
 \varphi^4(P) = \O$, we get an analogous equation in $R$, which
 together with $R \in E[2]$ gives $R \in T_5$. 
 
 [two minuses in a row] Assume $P + \varphi(P) + \varphi^2(P) -
 \varphi^3(P) - \varphi^4(P) = \O$. Applying $\varphi^2$ and adding,
 we get $2\varphi^2(P) = \O$, hence $P = -P$ and therefore $P \in
 T_5$. 
 
 [two minuses not in a row] Finally, assume $P + \varphi(P) -
 \varphi^2(P) + \varphi^3(P) - \varphi^4(P) = \O$. Applying $\varphi$
 and adding, we get $2 \varphi(P) = \O$, hence $P = -P$ and therefore
 $P \in T_5$. 
 
 The other cases follow by symmetry, thus proving the claim. 
\end{proof}

\begin{remark}\label{rmk:solns}
In the sequel, we use $f_n$ as an equation for $T_n$. In practice however, for any root $X\in\Fqn$ of $f_n(x,x^q,\ldots,x^{q^{n-1}})$ we need to be able to decide efficiently whether $(X,Y)\in T_n$.

For $n=3$ we only need to check that $Y\in\F_{q^3}$. This guarantees that $(X,Y)\in T_3$, by Proposition~\ref{prop:eqn}.

For $n=5$, by Proposition~\ref{prop:eqn} we have to exclude from the solutions of $f_5=0$ the points $(X,Y)\in E$ such that $Y\notin\F_{q^5}$ and the points of the form $Q+R$ where $\O\neq Q\in E[3](\Fq)$ and $R\in E[2]\cap T_5$. Let $\mathcal{L}$ be the set of the $x$-coordinates of the elements $Q+R\in E[3](\Fq)+E[2]\cap T_5$ with $Q\neq\O$. Then $\mathcal{L}$ has cardinality at most $16$. A root $X\in\F_{q^5}$ of $f_5(x,x^q,\ldots,x^{q^4})$ corresponds to a point $(X,Y)\in T_5$ if and only if  $X\notin\mathcal{L}$ and $Y\in\F_{q^5}$. 
\end{remark}

The $x$-coordinates of the points of $T_n$ correspond to zeros of the Weil restriction of the polynomial 
$f_n(x,\ldots,x^{q^{n-1}})$. Since $E$ is defined over $\Fq$, then $f_n(x,\ldots,x^{q^{n-1}})\in\Fq[x]$. 
Therefore, for any $\alpha\in\Fqn$ we have
$$f_n(\alpha,\ldots,\alpha^{q^{n-1}})^q=f_n(\alpha^q,\ldots,\alpha^{q^{n-1}},\alpha)
=f_n(\alpha,\ldots,\alpha^{q^{n-1}}),$$ 
where the second equality follows from the symmetry of the Semaev polynomial. It follows that 
\begin{equation}\label{fqrat}
f_n(\alpha,\ldots,\alpha^{q^{n-1}})\in\Fq~~~\mbox{for all}~\alpha\in\Fqn.
\end{equation}
We use the relations (\ref{wrcoords}) to write equations for the Weil restriction.
Notice that since we are only interested in the $\Fq$-rational points of the Weil restriction, 
we may reduce the equations that we obtain modulo $x_i^q-x_i$ for $i=0,\ldots,n-1$. 
Hence we obtain equations in $x_0,\ldots,x_{n-1}$ of degree less than $q$ in each indeterminate.
Now (\ref{fqrat}) together with Proposition \ref{prop:zeros} implies that the last $n-1$ 
equations are identically zero. 
Therefore, although Weil restriction could produce up to $n$ equations,
by reducing modulo the equations $x_i^q-x_i$ we obtain only one equation at the end.
We denote this new equation by
$$\tilde{f}_n(x_0,\ldots,x_{n-1}) = 0.$$
We stress that its $\Fq$-solutions correspond to the elements of $T_n$, 
together with some extra points described in Lemma~\ref{lemma:eqn} and Proposition~\ref{prop:eqn}. 
In Remark~\ref{rmk:solns} we discussed how to distinguish the extra solutions.
Since we reduce the Weil restriction of $f_n(x,x^q,\ldots,x^{q^{n-1}})$ modulo $x_i^q-x_i$, 
the $q$th powers disappear, and we are left with an equation $\tilde{f}_n$ of the same degree 
as the original Semaev polynomial $f_n$.

Concerning the representation, we now have an equation that is compatible with dropping the $y$-coordinate. It is a natural idea to drop one $X_i$ in order to obtain a compact representation, mimicking the approach of~\cite{naumann,lange-04,silverberg-05}. The decompression algorithm could then use $\tilde{f}_n$ to recompute the missing coordinate. However, since $\tilde{f}_n$ has relatively large degree, this would identify more points than desired. Moreover, the computation of the Weil restriction of the Semaev polynomials requires a large amount of memory. It is already very demanding for $n=5$. We present a modified approach to the problem in the next section.

\section{An optimal representation} \label{sec:eqn2}

As the Semaev polynomials are symmetric in nature, they can be written in terms of the symmetric functions. We write
\begin{equation} \label{eqn:symmetrize}
 f_n(z_1,\ldots,z_n) = g_n(e_1(z_1,\ldots,z_n),\ldots,e_n(z_1,\ldots,z_n)), 
\end{equation}
where $e_i$ are the elementary symmetric polynomials
$$e_i(z_1,\ldots,z_n) = \sum_{1 \leq j_1 < \ldots < j_i \leq n} z_{j_1} \cdot \ldots \cdot z_{j_i},$$
and call $g_n$ the ``symmetrized'' $n$-th Semaev polynomial. The advantage over the original Semaev polynomial is that $g_n$ has lower degree (e.g. 2 instead of 4 for $n=3$, and 8 instead of 32 for $n=5$) and fewer $\Fqn$-solutions, as it respects the inherent symmetry of the sum (i.e.\ where $f_n$ has as solutions all permutations of possible $x$-coordinates, $g_n$ has only one solution, the symmetric functions of these coordinates). See \cite{joux-vitse-12} for how to efficiently compute the symmetrized Semaev polynomials.
In this sense,
\begin{eqnarray} \label{eqn2}
 g_n(s_1,\ldots,s_n) = 0
\end{eqnarray}
also describes the points of $T_n$ via the relations
$$s_i = e_i(x,x^q,\ldots,x^{q^{n-1}}), ~i = 1,\ldots,n.$$
Notice that for $X \in \Fqn$, we have $e_i(X,X^q,\ldots,X^{q^{n-1}}) \in \Fq$. Summarizing, $g_n$ is a polynomial with $\Fq$-coefficients by equation (\ref{eqn:symmetrize}), as well as the polynomials $\tilde{e_i}$ that we obtain by Weil restriction from the symmetric functions in the $q$-powers of $x$:\begin{eqnarray} \label{relres}
 s_i = \tilde{e}_i(x_0,\ldots,x_{n-1}),~ i = 1,\ldots,n.
\end{eqnarray}
Furthermore, we get exactly one new relation per equation (reducing modulo $x_i^q-x_i$ and applying Proposition \ref{prop:zeros}, as before). Hence we
have a total of $n$ equations in the Weil restriction coordinates
describing the symmetric functions. The $q$th powers in the exponents
disappear thanks to the reduction, and each $\tilde{e}_i$ is
homogeneous of degree $i$. A combination of the equations (\ref{eqn2})
and (\ref{relres}) enables us to give a compact representation of the affine points of $T_n=V(\Fq)$. It can be computed with the {\it compression} algorithm, the full point can be recovered (up to some small ambiguity) with the {\it decompression} algorithm.

\bigskip
\noindent
{\bf Compression.}

\smallskip \noindent
{\sc Input:} $P = (X_0,\ldots,X_{n-1},Y_0,\ldots,Y_{n-1}) \in V(\Fq)$

\smallskip \noindent
Compute the symmetric functions of the Frobenius conjugates of $X$:
$$S_i = \tilde{e}_i(X_0,\ldots,X_{n-1}),~ i = 1,\ldots,n-1$$

\smallskip \noindent
{\sc Output:} $(S_1,\ldots,S_{n-1}) \in \F_q^{n-1}$

\bigskip
\noindent
{\bf Decompression.}

\smallskip \noindent
{\sc Input:} $(S_1,\ldots,S_{n-1}) \in \F_q^{n-1}$

\smallskip \noindent
Solve $g_n(S_1,\ldots,S_{n-1},t) = 0$ for $t$.\\
For each solution $\tau$, find a solution (if it exists) of the system
\begin{equation} \label{decsys}
\begin{array}{rcl}
S_1 & = & \tilde{e}_1(x_0,\ldots,x_{n-1}) \\
& \vdots & \\
S_{n-1} & = & \tilde{e}_{n-1}(x_0,\ldots,x_{n-1}) \\
\tau & = & \tilde{e}_n(x_0,\ldots,x_{n-1}).
\end{array}
\end{equation}
For the found solution $(X_0^{(j)},\ldots,X_{n-1}^{(j)})$, recompute one of the $y$-coordinates $Y^{(j)}$ belonging to $X^{(j)} = X_0^{(j)} + \ldots + X_{n-1}^{(j)} \zeta^{n-1}$ using the curve equation.\\
If $(X^{(j)},Y^{(j)})\in T_n$, then add $\pm P = (X^{(j)},\pm Y^{(j)})$ and all their Frobenius conjugates to the set of output points.

\smallskip \noindent
{\sc Output:} All points of $T_n=V(\Fq)$ that have $(S_1,\ldots,S_{n-1})$ as compact representation

\begin{remark}
Because of Lemma~\ref{lemma:eqn}, in the last step of the decompression algorithm, for each root $X^{(j)}$ of the polynomial $f_n$ one needs to check that the point $(X^{(j)},Y^{(j)})\in T_n$. This step can in practice be eliminated for $n=3,5$, as discussed in Remark~\ref{rmk:solns}.
\end{remark}


For a small set of points, equation (\ref{eqn2}) vanishes when evaluated in the given $S_1,\ldots,S_{n-1}$. For such points $P$, any $t\in\Fq$ solves the equation $g_n(S_1,\ldots,S_{n-1},t)=0$, making the computational effort for decompressing $\compr(P)$ very large. Therefore, our decompression algorithm is not practical for such points. However, for almost all points $P \in V(\Fq)$ the polynomial $g_n(S_1,\ldots,S_{n-1},t)$ has only a small number of roots in $t$ (upper bounded by the degree of $g_n$ in the variable $t$). For our analysis, we assume that we are in the latter case. Since the points of $V(\Fq)$ are described by $g_n(\tilde{e}_1(x_0,\ldots,x_{n-1}),\ldots,\tilde{e}_n(x_0,\ldots,x_{n-1}))$, we have $P \in \decompr(\compr(P))$. The relevant question is how many more points the output may contain. 

First of all, by compressing a point, we lose the ability to distinguish between Frobenius conjugates of points, since for each solution of system (\ref{decsys}), all Frobenius conjugates are also solutions. This can be compared to the fact that when using the ``standard'' compression, we lose the ability to distinguish between points and their negatives. If desired, a few extra bits can be used to remember that information. Alternatively, we can think of working in $T_n$ modulo an equivalence relation that identifies the Frobenius conjugates of each point and its negative. This reduces the size of the group $T_n$ by a factor $2n$, which is a small price to pay considering the amount of memory saved by applying the compression, especially since $n$ is small in practice. Notice also that it is enough to compute one solution of system (\ref{decsys}), since the set of all solutions consists precisely of the Frobenius conjugates of one point. This is because any polynomial in $n$ variables which is left invariant by any permutation of the variables can be written uniquely as a polynomial in the elementary symmetric functions $e_1,\ldots,e_n$.

Now, how many different equivalence classes of points can be output by the decompression algorithm depends only on the degree of $g_n$ in the last indeterminate. For $n=3$ the degree is one and decompression therefore outputs only a single class. As $n$ grows, the degree of the Semaev polynomial also grows, thus producing more ambiguity in the recovery process. This also reflects the growth in the number of extra points which satisfy the equation coming from the Semaev polynomial, as seen in Lemma~\ref{lemma:eqn}.

Notice moreover that there may be solutions $\tau$ of $g_n(S_1,\ldots,S_{n-1},t)=0$ for which system (\ref{decsys}) has no solutions, and that not all the solutions of system (\ref{decsys}) produce an equivalence class of points on the trace zero variety. E.g., if $X\in\Fqn$ satisfies $f_n(X,X^q,\ldots,X^{q^{n-1}})=0$, the corresponding point $P=(X,Y)\in E$ may have $Y\in\F_{q^{2n}}\setminus\Fqn$. In this case $P\notin T_n$.

Since our algorithms are most useful for $n=3$ and $5$, an asymptotic complexity analysis for general $n$ does not make much sense. In fact, it is easy to count the number of additions, multiplications, and squarings in $\Fq$ needed to compute the representation just from looking at the formulas for $s_1,\ldots,s_{n-1}$. We do this for the cases $n=3$ and 5 in Sections \ref{sec:eqnsdegree3} and \ref{sec:eqnsdegree5}, respectively. There, we also discuss the efficiency of our decompression algorithm and how it compares to the approaches of \cite{naumann,silverberg-05}.

\begin{remark}
 In order to compute with points of $T_n$, we suggest to decompress a
 point, perform the operation in $E(\Fqn)$, and compress again the
 result. Since compression and decompression is very efficient, this
 adds only little overhead. In an environment with little storage
 and/or bandwidth capacity, the memory savings of compressed points
 may well be worth this small trade-off with the efficiency of the
 arithmetic. Also notice that scalar multiplication of trace zero
 points in $E(\Fqn)$ is more efficient than scalar multiplication of
 arbitrary points of $E(\Fqn)$, due to a speed-up using the Frobenius
 endomorphism, as pointed out by Frey \cite{frey-99} and studied in
 detail by Lange \cite{lange-phd,lange-04} and subsequently by Avanzi
 and Cesena~\cite{avanzi-cesena-07}.
 
 Our recommendation corresponds to usual implementation practice in the setting of point compression: Even when a method to compute with compressed points is available, it is usually preferable to perform decompression, compute with the point in its original representation, and compress the result. For example, Galbraith-Lin show in \cite{galbraith-lin-09} that although it is possible to compute pairings using the $x$-coordinates of the input points only, it is more efficient in most cases (namely, whenever the embedding degree is greater than 2) to recompute the $y$-coordinates of the input points and perform the pairing computation on the full input points. As a second example, let us consider the following two methods for scalar multiplication by $k$ of an elliptic curve point $P = (X,Y)$ when only $X$ is given:
 \begin{enumerate}
  \item Use the Montgomery ladder, which computes the $x$-coordinate of $kP$ from $X$ only.
  \item Find $Y$ by computing a square root, apply a fast scalar multiplication algorithm to (X,Y), and return only the $x$-coordinate of the result.
 \end{enumerate}
All recent speed records for scalar multiplication on elliptic curves
have been set using algorithms that need the full point $P$, in other
words with the second approach, see e.g.\ \cite{ed25519,longa-sica-12,lambda,faz-hernandez-13}. Timings typically ignore
the additional cost for point decompression, but there is strong
evidence that on a large class of elliptic curves the second approach is
faster. This is the basis for our suggestion to follow the second approach when working with compressed points of $T_n$.
\end{remark}

\section{Explicit equations for extension degree 3} \label{sec:eqnsdegree3}

We give explicit equations for $n=3$, where we write $\F_{q^3} = \Fq[\zeta]/(\zeta^3-\mu)$ and use $1,\zeta,\zeta^2$ as a basis for $\F_{q^3}|\Fq$. For completeness, we start with the standard equations for the trace zero variety (see \cite{frey-99}), although we do not make further use of them in our approach. They describe an open affine part of the trace zero variety (i.e.\ they hold when $x_1, x_2 \neq 0$):
\begin{equation} \label{sys:frey}
\begin{array}{rcl}
  y_0^2 + 2 \mu y_1 y_2 & = & x_0^3 + \mu x_1^3 + \mu^2 x_2^3 + 6 \mu x_0 x_1 x_2 + A x_0 + B\\
  2 y_0 y_1 + \mu y_2^2 & = & 3 x_0^2 x_1 + 3 \mu x_0 x_2^2 + 3 \mu x_1^2 x_2 + A x_1\\
  2 y_0 y_2 + y_1^2 & = & 3 x_0^2 x_2 + 3 x_0 x_1^2 + 3 \mu x_1 x_2^2 + A x_2\\
  x_1 y_2 & = & x_2 y_1.
\end{array}
\end{equation}
The equation that we found in Section~\ref{sec:eqn1} only involves the $x$-coordinate and is
\begin{eqnarray*}
 f_3(x,x^q,x^{q^2}) & = & x^{2q^2+2q} - 2x^{2q^2+q+1} + x^{2q^2+q} - 2x^{q^2+2q+1} -2x^{q^2+q+2}-2Ax^{q^2+q} \\
                                  & &  - 2Ax^{q^2+1} - 4Bx^{q^2} + x^{2q+2} - 2Ax^{q+1} - 4Bx^q- 4Bx + A^2.
\end{eqnarray*}
For Weil restriction, we write $x = x_0 + x_1 \zeta + x_2 \zeta^2$ and get
$$ \begin{array}{rcl}
 x & = & x_0 + x_1 \zeta + x_2 \zeta^2\\
 x^q & = & x_0 + \mu^b x_1 \zeta + \mu^{2b} x_2 \zeta^2 \\
 x^{q^2} & = & x_0 + \mu^{2b} x_1 \zeta + \mu^b x_2 \zeta^2,
\end{array} $$
where $b = \frac{q-1}{3}$. The second and third equalities follow from observing that we can substitute $x_i$ for $x_i^q$ when looking for $\Fq$-solutions. This gives
\begin{equation} \label{eqn:naumann}
\begin{array}{rcl}
  \tilde{f}_3(x_0,x_1,x_2) & = & -3x_0^4 - 12 \mu^2 x_0 x_2^3 - 12 \mu x_0 x_1^3 + 18 \mu x_0^2 x_1 x_2\\
  & &  + 9 \mu^2 x_1^2 x_2^2 - 6Ax_0^2 + 6A \mu x_1 x_2 - 12Bx_0 + A^2.
\end{array}
\end{equation}
The symmetrized third Semaev polynomial is
\begin{equation} \label{eqn3}
 g_3(s_1,s_2,s_3) = s_2^2 - 4s_1s_3 - 4Bs_1 - 2As_2 + A^2 
\end{equation}
and describes the trace zero subgroup via
\begin{equation} \label{sys3}
 \begin{array}{rclcl}
 s_1 & = & x + x^q + x^{q^2} & = & 3x_0 \\
 s_2 & = & x^{1+q} + x^{1+q^2} + x^{q+q^2} & = & 3x_0^2 - 3\mu x_1x_2\\
 s_3 & = & x^{1+q+q^2} & = & x_0^3 - 3 \mu x_0 x_1 x_2 + \mu x_1^3 + \mu^2 x_2^3. 
\end{array} 
\end{equation}
So for compression of a point $(x_0,x_1,x_2,y_0,y_1,y_2)$, we use the coordinates
$$ (s_1,s_2) = (3x_0,3x_0^2 - 3\mu x_1x_2), $$
and for decompression, we have to solve $g_3(s_1,s_2,s_3) = 0$ for
$s_3$, where $g_3$ is given by equation (\ref{eqn3}). Since the
equation is linear in $s_3$, the missing coordinate can be recovered
uniquely, except when $s_1 = 0$. This is the case only for a small set of points. 
Notice moreover that the points $(0,s_2,s_3)$
with $s_2^2-2As_2+A^2=0$ satisfy equation (\ref{eqn3}) for every
$s_3$. 
The only ambiguity in decompression comes from solving system (\ref{sys3}), which yields the Frobenius conjugates $x,x^q,x^{q^2}$ of the original $x$. So for $n = 3$ this gives an optimal representation in our sense.

The following representation is equivalent to the above, but easier to compute. Set
\begin{equation} \label{sys4}
 t_1  =  x_0,~
 t_2  =  x_1x_2,~
 t_3  =  x_1^3 + \mu x_2^3,
\end{equation}
and take $(t_1, t_2)$ as a representation. The relation between the two sets of coordinates is
$$
 s_1  =  3t_1,~
 s_2  =  3t_1^2 - 3 \mu t_2,~
 s_3  =  t_1^3 - 3\mu t_1t_2 + \mu t_3.
$$
In this case, we recover $t_3$ from the equation
$$ -3t_1^4 + 18 \mu t_1^2 t_2 + 9 \mu^2 t_2^2 - 12 \mu t_1t_3 - 12Bt_1 -6At_1^2 + 6A\mu t_2 + A^2. $$
The equation is linear in $t_3$, thus making point recovery unique whenever $t_1 \neq 0$, but the total degree is higher. Compared to the representation $(s_1,s_2)$, fewer operations are needed for compression and for computing the solutions of the system during decompression. Thus, compression and decompression for this variant of the representation are more efficient. We give timings for 10, 20, 40, 60, and 79 bit fields in Table \ref{timings3}, where we see that compression is about a factor 3 to 4 faster and decompression is slightly faster for the second method. Notice that decompression timings are for recomputing the $x$-coordinate only.

All computations were done with Magma version 2.19.3 \cite{magma}, running on one core of an Intel Xeon Processor X7550 (2.00 GHz) on a Fujitsu Primergy RX900S1. Our Magma programs are straight forward implementations of the methods presented here and are only meant as an indication. No particular effort has been put into optimizing them. 

\begin{table}
\caption{Average time in milliseconds for compression/decompression of one point when $n=3$}
\label{timings3}
\begin{tabular}{l|lllll}
\hline\noalign{\smallskip}
$q$ & $2^{10}-3$ & $2^{20}-3$ & $2^{40}-87$ & $2^{60}-93$ & $2^{79}-67$  \\
\noalign{\smallskip}\hline\noalign{\smallskip}
Compression $s_i$ & 0.007 & 0.014 & 0.028 & 0.039 &  0.064\\
Compression $t_i$ & 0.002 & 0.007 & 0.008 & 0.010 & 0.015 \\
Decompression $s_i$ & 0.124 & 0.159 & 0.731 & 0.987 & 1.586\\
Decompression $t_i$ & 0.090 & 0.132 & 0.610 & 0.956 & 1.545\\
\noalign{\smallskip}\hline
\end{tabular}
\end{table}

We give a concrete example, before concluding the section with a more detailed analysis of the efficiency of our algorithms.
\begin{example} \label{ex3}
Let $E$ be the curve $y^2 = x^3 + x + 368$ over $\Fq$, where $q = 2^{79}-67$ is a $79$-bit prime, and $\mu = 3$. The trace zero subgroup of $E(\F_{q^3})$ has prime order of 158 bits. We choose a random point (to save some space, we write only $x$-coordinates)
$${\small 
\begin{array}{c}
P = 260970034280824124824722 + 431820813779055023676698 \zeta + 496444425404915392572065 \zeta^2   \in T_3
\end{array} } $$
and compute
$${\small 
\begin{array}{l} 
 \compr(P)  =  (178447193035157787121145, 159414355696879147312583) \smallskip \\ 
 \decompr(178447193035157787121145, 159414355696879147312583) = \\ 
        ~~~\{ 260970034280824124824722 + 431820813779055023676698 \zeta + 496444425404915392572065 \zeta^2  , \\
        ~~~~   260970034280824124824722 + 318397306102476549147695 \zeta + 124410673032925784958936 \zeta^2 , \\
        ~~~~   260970034280824124824722 + 458707699733097601881649 \zeta + 88070721176787997175041 \zeta^2   \}
\end{array}}$$
where the results of decompression are exactly the Frobenius conjugates of $P$. In our Magma implementation, we solve system (\ref{sys3}) over $\Fq$ similarly to how one would do it by hand, as described below.
Note that the solutions could also be found by computing the roots of the polynomial $x^3 - s_1x^2 + s_2x - s_3$ over $\F_{q^3}$, but since the system is so simple for $n=3$, solving the system directly is faster in all instances.

When using the second variant of the representation, we compute
$$ {
\begin{array}{c}
 (t_1,t_2) = (260970034280824124824722, 492721032528256431308437)
\end{array}}$$
and naturally get the same result for decompression by solving system (\ref{sys4}) in a similar way.
\end{example}

\medskip \noindent
{\bf Operation count for representation in the $s_i$.} 
Where possible, we count squarings (S), multiplications (M), and divisions (D) in $\Fq$. We do not count multiplication by constants, since they can often be chosen small (see \cite{lange-04}), and multiplication can then be performed by repeated addition. Compressing a point clearly takes 1S+1M. Decompression requires the following steps.
\begin{itemize}
 \item Evaluating $g_3(s_1,s_2,s_3)$ in the first two indeterminates and solving for the third indeterminate means computing $s_3 = \frac{1}{4s_1}(s_2(s_2-2A) - 4Bs_1 + A^2)$, which takes 1M+1D.
 \item Given $s_1,s_2,s_3,$ we need to solve system (\ref{sys3}) for $x$, or for $x_0,x_1,x_2$. The most obvious way would be to compute the roots of the univariate polynomial $x^3 - s_1x^2 + s_2x - s_3$ over $\F_{q^3}$. Finding all roots of a degree $d$ polynomial over $\Fqn$ takes $O(n^{\log_2 3} d^{\log_2 3} \log d \log(dq^n))$ operations in $\Fq$ using Karatsuba's algorithm for polynomial multiplication (see \cite{gerhard-gathen}). In our case, the degree and $n$ are constants, and hence factoring this polynomial takes $O(\log q)$ operations in $\Fq$. However, since the system is so simple, in practice it is better to solve directly for $x_0,x_1,x_2$ over $\Fq$. We know that the system has exactly three solutions (except in very few cases, where it has a unique solution in $\Fq$, i.e. $x_1 = x_2 = 0$). We get $x_0$ from $s_1$ for free. Assuming that $x_1 \ne 0$ (the special case when $x_0=0$ is easier than this general case), we can solve the second equation for $x_2$, plug this into the third equation, and multiply by the common denominator $27 \mu^3 x_1^3$. In this way, we obtain the equation
\begin{equation*}
  27 \mu^4 x_1^6 + 27 \mu^3( x_0 (s_2 - 2 x_0^2) -  s_3) x_1^3 + \mu^2 (3x_0^2-s_2)^3,
\end{equation*}
which must be solved for $x_1$. The coefficient of $x_1^6$ is a constant, the coefficient of $x_1^3$ can be computed with 1S+1M, and the constant term can then be computed with 1S+1M. Now we can solve for $x_1^3$ with the quadratic formula, which takes 1S and a square root in $\Fq$ for the first value, which will have either no or three distinct cube roots. In case it has none, we compute the second value for $x_1^3$, using only an extra addition, and the three distinct cube roots of this number. This gives a total of 3 values for $x_1$. Finally, we can compute $x_2 = \frac{3 x_0^2 - s_2}{3 \mu x_1}$, which takes 1D for the first, and a multiplication by the inverse of a cube root of unity for the other two values. Altogether, solving system (\ref{sys3}) takes a total of at most 3S+2M+1D, 1 square root, and 2 cube roots in $\Fq$.
\item Finally, for each of the at most $3$ values for $x$, we recompute a corresponding $y$-coordinate from the curve equation and check that it belongs to $\F_{q^3}$. Since these are standard procedures for elliptic curves, we do not count operations for these tasks.
\end{itemize}
Therefore, the decompression algorithm takes at most 3S+3M+2D, one square root, and two cube roots in $\Fq$. The cost of computing the roots depends on the specific choice of the field and on the implementation, but it clearly dominates this computation.

\bigskip \noindent
{\bf Operation count for representation in the $t_i$.} 
In this case, compression takes only 1M. For decompression, we proceed as follows.
\begin{itemize}
 \item Given $t_1$ and $t_2$, we recover $t_3$ from the equation $t_3 = \frac{1}{12 \mu t_1}(-3t_1^4 + (18 \mu t_1^2 + 9 \mu^2 t_2 + 6A \mu)t_2 - 12Bt_1 - 6At_1^2 + A^2)$. This takes 2S+1M+1D.
 \item To solve system (\ref{sys4}), again assuming $x_1 \ne 0$, we have to find the roots of the equation
 $$ x_1^6 - t_3x_1^3 + \mu t_2^3 .$$
 The coefficients of this equation can be computed with a total of 1S+1M. We proceed as above to compute 3 values for $x_1$ using 1S, 1 square root, and 2 cube roots. Finally, we compute $x_2 = \frac{t_2}{x_1}$ using 1D. Thus, solving the system takes a total of at most 2S+1M+1D, 1 square root, and 2 cube roots.
\end{itemize}
In total, decompression takes at most 4S+2M+2D, 1 square root, and 2 cube roots. The cost of this computation is comparable to the decompression using $s_i$. This corresponds to our experimental results with Magma (see Table \ref{timings3}).

\bigskip \noindent
{\bf Comparison with Silverberg's method.}
The representation of \cite{silverberg-05} consists of the last $n-1$ Weil restriction coordinates, together with three extra bits, say $0 \leq \nu \leq 3$ to resolve ambiguity in recovering the $x$-coordinate and $0 \leq \lambda \leq 1$ to determine the sign of the $y$-coordinate. So in our notation, Silverberg proposes to represent a point $(x,y) \in T_3$ is via the coordinates $(x_1,x_2,\nu,\lambda)$. The compression and decompression algorithms (in characteristic not equal to 3) carry out essentially the same steps:
\begin{itemize}
 \item Compute a univariate polynomial of degree $4$. The coefficients are polynomials over $\Fq$ in 2 indeterminates of degree at most 4.
 \item Compute the (up to 4) roots of this polynomial. During compression, this determines $\nu$. During decompression, $\nu$ determines which root is the correct one, and it is then used to compute $x_0$ via  addition and multiplication with constants.
 \item During decompression, compute the $y$-coordinate from the curve equation, using $\lambda$ to determine its sign. We disregard this step when estimating the complexity.
\end{itemize}
Since \cite{silverberg-05} does not contain a detailed analysis of the decompression algorithm, we cannot compare the exact number of operations. However, the essential difference with our approach is that Silverberg's compression and decompression algorithms both require computing the roots of a degree 4 polynomial over $\Fq$. For compression, this is clearly more expensive than our method, which consists only of evaluating some small expressions. For decompression, this is also less efficient than our method, which computes only a root of a quadratic polynomial, since running a root finding algorithm, or using explicit formulas for the solutions (i.e.\ solving the quartic by radicals), is much more complicated than computing the roots of our equation. 

One might argue that it is possible to represent $(x,y)$ via the
coordinates $(x_1,x_2)$ only. In such a case, compression would
consist simply of dropping $y$ and $x_0$ and would therefore have no
computational cost. Without remembering $\nu$ and $\lambda$ to resolve
ambiguity, this representation would identify up to 4 $x$-coordinates
and up to 8 full points. This is not much worse than our
representation, which identifies up to 3 $x$-coordinates and 6 full
points. However, it is not clear that this identification is
compatible with scalar multiplication of points. Therefore, one may
want to use at least $\nu$ to distinguish between the recovered
$x$-coordinates. This is in contrast with our situation, where we know
exactly which points are recovered during decompression (i.e.\ the
three Frobenius conjugates of the original point). Identifying these
three points is compatible with scalar multiplication, since $P =
\varphi^i(Q)$ implies $kP = \varphi^i(kQ)$ for all $k \in \N$ and $P,Q
\in T_3$, and so no extra bits are necessary. 

\bigskip \noindent
{\bf Comparison with Naumann's method.}
Naumann \cite{naumann} studies trace zero varieties for $n=3$. He does not give explicit compression and decompression algorithms, but he derives an equation for the trace zero subgroup that may be used for such. In fact, his equation is identical to our equation (\ref{eqn:naumann}), the Weil restriction of the (unsymmetrized) Semaev polynomial. However, he obtains it in a different way, namely, by eliminating from system (\ref{sys:frey}). 

Naumann suggests a compression method analogous to the one of Silverberg: A point is represented via the coordinates $(x_1,x_2,\nu,\lambda)$. For decompression, $x_0$ is recomputed from a quartic equation, $0 \leq \nu \leq 3$ determines which root of the equation is the correct $x_0$, and $0\leq \lambda \leq 1$ determines the sign of the $y$-coordinate. Hence the quartic equation must be solved during both compression and decompression. Naumann's equation is different from Silverberg's, yet the analysis of his method is analogous to that of Silverberg's method, and the conclusions are the same. In particular, his algorithms are less efficient than ours, and it is not clear whether it is possible to drop $\nu$ from the representation and still have a well defined scalar multiplication.

\bigskip \noindent
{\bf Security issues.} To the extent of our knowledge, there are no known attacks on the DLP in $T_3$ whose complexity is lower than generic (square root) attacks, provided that one chooses the parameters according to usual cryptographic practice. In particular, the group should have prime or almost prime order and be sufficiently large (e.g.\ 160 or 200 bits). We stress that index calculus methods, as detailed in \cite{gaudry-09} among many other works, do not yield an attack which is better than generic (square root) attacks in this setting, since the trace zero variety has dimension 2.

\section{Explicit equations for extension degree 5} \label{sec:eqnsdegree5}

The fifth Semaev polynomial is too big to be printed here, but a computer program can easily work with it. It has total degree 32 and degree 8 in each indeterminate. The symmetrized fifth Semaev polynomial has total degree 8 and degree 6 in the last indeterminate. In fact, it has degree 6 in the first, third and fifth indeterminate, and degree 8 in the second and fourth indeterminate. We can compute it efficiently with Magma. It has a small number of terms compared to the original polynomial, but printing it here would still take several pages.

The fact that we recover the missing coordinate from a degree 6 polynomial introduces some indeterminacy in the decompression process. However, extensive Magma experiments for different field sizes and curves show that for more than 90\% of all points in $T_5$, only a single class of Frobenius conjugates is recovered. For another 9\%, two classes (corresponding to $10$ $x$-coordinates) are recovered. Thus the ambiguity is very small for a great majority of points. In any case, this improves upon the approach of \cite{silverberg-05}, where the missing coordinate is recovered from a degree 27 polynomial, thus possibly yielding $27$ different $x$-coordinates.

The Weil restriction of the symmetric functions is
\begin{eqnarray*}
 s_1 & = & 5 x_0 \\
 s_2 & = & 10 x_0^2-5 \mu x_1x_4-5 \mu x_2x_3 \\
 s_3 & = & 10 x_0^3+5 \mu^2x_3^2x_4+5 \mu^2x_2x_4^2+5 \mu x_1x_2^2+5 \mu x_1^2x_3-15\mu x_0x_1 x_4-15 \mu x_0x_2x_3\\
 s_4 & = & 5 x_0^4-15\mu x_0^2x_1 x_4-15\mu x_0^2x_2x_3 -5 \mu x_1^3x_2-5 \mu^2x_1x_3^3-5 \mu^2x_2^3x_4-5 \mu^3x_3x_4^3+5 \mu^2x_2^2x_3^2\\
        &     & +5 \mu^2x_1^2x_4^2+10 \mu x_0x_1^2x_3+10 \mu x_0x_1x_2^2+10 \mu^2x_0x_3^2x_4+10 \mu^2x_0x_2x_4^2 -5 \mu^2x_1x_2x_3x_4\\
 s_5 & = & x_0^{5}+\mu^3x_3^{5}+\mu^4x_4^{5}+\mu x_1^5+\mu^2x_2^5-5 \mu^2x_1x_2^3x_3-5 \mu^3x_1x_2x_4^3-5 \mu^3x_2x_3^3x_4-5 \mu x_0x_1^3x_2\\
         &    & -5 \mu^2x_0x_1x_3^3-5 \mu^2x_0x_2^3x_4-5 \mu^3x_0x_3x_4^3-5 \mu^2x_1^3x_3x_4-5 \mu x_0^3x_1x_4-5 \mu x_0^3x_2x_3\\
         &    & +5 \mu x_0^2x_1^2x_3+5 \mu x_0^2x_1x_2^2+5 \mu^2x_0^2x_2x_4^2+5 \mu^2x_0^2x_3^2x_4+5 \mu^2x_0x_1^2x_4^2+5 \mu^2x_0x_2^2x_3^2\\
         &    & +5 \mu^2x_1^2x_2^2x_4+5 \mu^2x_1^2x_2x_3^2+5 \mu^3x_1x_3^2x_4^2+5 \mu^3x_2^2x_3x_4^2-5 \mu^2x_0x_1x_2x_3x_4.
\end{eqnarray*}
The compression algorithm computes $s_1,\ldots,s_4$ according to these formulas over $\Fq$. The decompression algorithm solves a degree 6 equation for $s_5$ and then recomputes the $x$-coordinate of the point. For the last step, we test two methods: We compute $x$ by factoring the polynomial $x^5 - s_1 x^4 + s_2x^3 - s_3x^2 + s_4x - s_5$ over $\F_{q^5}$, and we compute $x_0,\ldots,x_4$ by solving the above system over $\Fq$ with a Gr\"obner basis computation. Our experiments show that polynomial factorization can be up to 20 times as fast as computing a lexicographic Gr\"obner basis in Magma for some choices of $q$, and the entire decompression algorithm can be up to a factor 6 faster when implementing the polynomial factorization method. We give some exemplary timings for both methods for fields of 10, 20, 30, 40, 50 and 60 bits in Table \ref{timings5}. However, these experimental results can only be an indication: In Magma, the performance of the algorithms depends on the specific choice of $q$. In addition, any implementation exploiting a special shape of $q$ would most likely produce better results.

As for $n=3$, we suggest an equivalent representation $(t_1,t_2,t_3,t_4)$ where
\begin{equation} \label{tinx}
\begin{array}{rcl}
 t_1 & = & x_0\\
 t_2 & = & x_1x_4 + x_2x_3\\
 t_3 & = & x_1^2x_3 + x_1 x_2^2 + \mu x_3^2 x_4 + \mu x_2 x_4^2\\
 t_4 & = & \mu x_2^2 x_3^2 + \mu x_1^2 x_4^2 - \mu x_1 x_3^3 - x_1^3 x_2 - \mu x_2^3 x_4 - \mu^2 x_3 x_4^3 + \mu x_1 x_2 x_3 x_4\\
 t_5 & = & x_1^5 + \mu x_2 ^ 5 + \mu^2 x_3^5 + \mu^3 x_4^5 + 5 \mu x_1^2 x_2 x_3^2 + 5 \mu x_1^2 x_2^2 x_4 + 5 \mu^2 x_2^2 x_3 x_4^2 \\
       &     &+ 5 \mu^2 x_1 x_3^2 x_4^2 - 5 \mu x_1^3 x_3 x_4 - 5 \mu^2 x_2 x_3^3 x_4 - 5 \mu^2 x_1 x_2 x_4^3 - 5 \mu x_1 x_2^3 x_3
\end{array}
\end{equation}
and
\begin{equation} \label{sint}
 \begin{array}{rcl}
 s_1 & = &5 t_1 \\
 s_2 & = & 10 t_1^2 - 5 \mu t_2 \\
 s_3 & = & 10 t_1^3 - 15 \mu t_1t_2 + 5 \mu t_3\\
 s_4 & = & 5 t_1^4 - 15 \mu t_1^2t_2 + 10 \mu t_1 t_3 + 5 \mu t_4\\
 s_5 & = & t_1^5 - 5 \mu t_1^3 t_2 + 5 \mu t_1^2 t_3 + 5 \mu t_1 t_4 + \mu t_5.
\end{array} 
\end{equation}
Compared to the representation in the $s_i$, this representation gives a faster compression, but a slower decompression. Therefore, this approach may be useful in a setting where compression must be particularly efficient.

For decompression, the missing coordinate $t_5$ can be recomputed from a degree 6 equation, which we obtain by substituting the relations (\ref{sint}) into the symmetrized fifth Semaev polynomial. Afterwards we may either recompute $s_1,\ldots,s_5$ from $t_1,\ldots,t_5$ according to system (\ref{sint}) and solve $x^5 - s_1 x^4 + s_2x^3 - s_3x^2 + s_4x - s_5$ for $x$, or else we may solve system (\ref{tinx}) directly for $x_0,\ldots,x_4$ with Gr\"obner basis techniques. The polynomial factorization method is equivalent to using the representation in the $s_i$, only that some of the computations are shifted from the compression to the decompression algorithm. The Gr\"obner basis method (use $t_i$ and compute Gr\"obner basis, ``second method'') compares to using $s_i$ with Gr\"obner basis (``first method'') as given in Table \ref{timings5}. We see that the second method is a factor 2 to 3 faster in compression, but slower in decompression. The reason for this is that the polynomial used to recompute the missing coordinate is more complicated for the second method, and evaluation of polynomials is quite slow in Magma. Solving for the missing coordinate takes 5 times longer for the second method. The solution of system (\ref{decsys}), which we achieve by computing a lexicographic Gr\"obner basis and solving the resulting triangular system in the obvious way, takes the same amount of time in both cases.

\begin{table}\footnotesize
\caption{Average time in milliseconds for compression/decompression of one point when $n=5$}
\label{timings5}
\begin{tabular}{l|llllll}
\hline\noalign{\smallskip}
$q$ & $2^{10}-3$ & $2^{20}-5$ & $2^{30}-173$ & $2^{40}-195$ & $2^{50}-113$ & $2^{60}-695$ \\
\noalign{\smallskip}\hline\noalign{\smallskip}
Compression $s_i$                                                  & 0.041 & 0.048 & 0.052 & 0.106 & 0.108 & 0.112 \\
Compression $t_i$                                                   & 0.017 & 0.022 & 0.024 & 0.031 & 0.021 & 0.048 \\
Decompression $s_i$ poly factorization & 5.536 & 16.480 & 21.423 & 45.080 & 55.872 & 59.520 \\
Decompression $s_i$ Gr\"obner basis                & 24.134 & 26.470 & 39.593 & 101.559 & 104.490 & 118.991 \\
Decompression $t_i$ Gr\"obner basis                 & 38.375 & 40.198 & 60.438 & 132.484 & 133.088 & 150.083 \\
\noalign{\smallskip}\hline
\end{tabular}
\end{table}

We now give an example of our compression/decompression algorithms, including two points $P$ on the trace zero variety where $\decompr(\compr(P))$ produces the minimum and maximum possible number of outputs.

\begin{example} \label{ex5}
Let $E$ be the curve $y^2 = x^3 + x + 135$ over $\Fq$, where $q = 2^{60}-695$ is a $60$-bit prime, and $\mu = 3$. The trace zero subgroup of $E(\F_{q^5})$ has prime order of 240 bits. We choose a random point 
{\small 
\begin{eqnarray*}
P & = & 697340666673436518 + 801324486821916366 \zeta + 191523769921581598 \zeta^2 \\
&&+ 193574581008452232 \zeta^3 + 808272437423069772 \zeta^4    \in T_5
\end{eqnarray*}}
and compute
$${\small 
\begin{array}{l}
 \compr(P)  =  (27938819546643747, 599177118073319826, 587362643323803394, 899440023033601132) \smallskip \\
 \decompr(27938819546643747, 599177118073319826, 587362643323803394, 899440023033601132)\\
 ~~~ =  \{ 697340666673436518 + 801324486821916366 \zeta + 191523769921581598 \zeta^2 \\
 ~~~~~~~~~~  + 193574581008452232 \zeta^3 + 808272437423069772 \zeta^4, \\
 ~~~~~~~~~~     697340666673436518 + 836712212802745328 \zeta + 506907366758395901 \zeta^2  \\
  ~~~~~~~~~~   + 517000572714098077 \zeta^3 + 268866625974497959  \zeta^4, \\
 ~~~~~~~~~~   697340666673436518  + 960543166171367987 \zeta+ 126552294958642222 \zeta^2\\
    ~~~~~~~~~~ +  448251978051599093 \zeta^3  + 74315924307841334 \zeta^4  , \\
 ~~~~~~~~~~   697340666673436518 + 810370833605859760 \zeta + 539948230971075773 \zeta^2 \\
   ~~~~~~~~~~  + 1032750511909194579 \zeta^3 + 944608723064092684 \zeta^4 , \\
 ~~~~~~~~~~  697340666673436518  + 49813814418649402 \zeta+ 940911346603997068 \zeta^2\\
    ~~~~~~~~~~ + 114265365530348581 \zeta^3+ 209779298444190813 \zeta^4  \}.
\end{array}}$$
When using the second variant of the representation, we compute
$${\small
\begin{array}{c} (t_1,t_2,t_3,t_4) = (697340666673436518, 553115374027544004, 315951679773440541, 285024754797056479). 
\end{array}}$$
For this point, the results of decompression are exactly the Frobenius conjugates of $P$. However, this is not always the case. In rare cases, the algorithm may recover up to six classes of Frobenius conjugates. We give an example of a point for which three classes of Frobenius conjugates are recovered:
{\small 
\begin{eqnarray*}
P & = & 760010909342414570 + 568064535058825884 \zeta+ 244006548504894796 \zeta^2\\
&&+ 446522043528586762 \zeta^3  + 731314735984238952 \zeta^4  \in T_5.
\end{eqnarray*}}
\end{example}

\medskip \noindent
{\bf Operation count for representation in the $s_i$.} 
Given $x_0,\ldots,x_4$, the numbers $t_1,\ldots,t_4$ can be computed with a total of 5S+13M according to (\ref{tinx}). Then $s_1,\ldots, s_4$ can be computed from those numbers with 2S+3M as given in (\ref{sint}). This seems to be the best way to compute $s_1,\ldots,s_4$, since these formulas group the terms that appear several times. Hence compression takes a total of 7S+16M.

 For decompression, the most costly part of the algorithm is factoring the polynomials. First, the algorithm has to factor a degree 6 polynomial over $\Fq$, and next, a degree 5 polynomial over $\F_{q^5}$. The asymptotic complexity for both of these is $O(\log q)$ operations in $\Fq$.

\bigskip \noindent
{\bf Operation count for representation in the $t_i$.} Compression takes 5S+13M. For decompression, we can either recompute $s_1,\ldots,s_5$ from $t_1,\ldots,t_5$ and factor the polynomial, in which case this approach is exactly the same as the above. Or else we can solve system (\ref{tinx}) by means of a Gr\"obner basis computation over $\Fq$. Since there are no practically meaningful bounds for Gr\"obner basis computations, a complexity analysis of this approach makes no sense.

\bigskip \noindent
{\bf Comparison with Silverberg's method.}
Concrete equations are presented in \cite{silverberg-05} for the case where the ground field has characteristic 3. The most costly parts of the compression and decompression algorithms are computing the resultant of two polynomials of degree 6 and 8 with coefficients in $\Fq$, and finding the roots of a degree 27 polynomial over $\Fq$. In general, resultant computations are difficult, and the polynomial to be factored has much larger degree than those in our algorithm. In Silverberg's approach, five extra bits are required to distinguish between the possible 27 roots of the polynomial.

Although neither Silverberg nor we give explicit equations for larger $n$, our understanding is that our algorithm scales better with increasing $n$, since our method is more natural and respects the structure of the group.

\bigskip \noindent
{\bf Security issues.} We briefly discuss the security issues connected with use of $T_5$ in 
DL-based and pairing-based cryptosystems. 

Since $T_5$ is a group of size $q^4$, generic
algorithms that solve the DLP in $T_5$ have complexity $O(q^2)$. 
Security threats in the context of DL-based cryptosystems are posed by 
algorithms for solving the DLP that achieve lower complexity. 
There are two types of algorithms that one needs to consider: 
First, cover attacks aim to transfer the DLP in $E(\F_{q^5})$ to the DLP in the
Picard group of a curve of larger (but still rather low) genus, see
\cite{ghs,diem-ghs}. The DLP is then solved there using index calculus
methods. Combining the results of \cite{diem-ghs} 
and \cite{diem-06}, it is sometimes possible to map the DLP from $T_5$ into the  Picard group of a
genus 5 curve (which is usually not hyperelliptic), where it can be
solved with probabilistic complexity $\tilde{O}(q^{4/3})$ following the approach of~\cite{diem-kochinke}. 
However, only a very small proportion of curves is affected by this attack, 
and such curves should be avoided in practice. Moreover, in order to avoid isogeny attacks, the curve should be chosen such that 4 does not divide the order of $T_5$, see \cite{diem-ghs}. 
Second, the index calculus attack of \cite{gaudry-09} applies to $T_5$ and has complexity $\tilde{O}(q^{3/2})$.  
This makes $T_5$ not an ideal group to use in a DL-based cryptosystem.
Notice that in practice, however, the constant in the $O$ is very large, 
since the attack requires Gr\"obner basis computations, which are very time consuming 
(their worst case complexity is doubly exponential in the size of the input), 
and often do not terminate in practice. 
It is our impression that more in depth study is needed in order to give a
precise estimate of the feasibility of such an attack for a practical
choice of the parameters. We carried on preliminary experiments, 
which indicate that a straightforward application of the method from \cite{gaudry-09} to $T_5$ 
yields a system of equations which is very costly to compute (it requires computing the Weil descent of the fifth Semaev polynomial) and which Magma cannot solve in several weeks and using more than 300 GB of memory on the same machine that we used to carry out the experiments reported on in Sections \ref{sec:eqnsdegree3} and \ref{sec:eqnsdegree5} of this article.
Notice that solving such a system would (possibly) produce one relation, to be then used in an index calculus 
attack. Therefore, in practice one would need to solve many such systems, in order to produce the relations needed 
for the linear algebra step of the index calculus attack.

Trace zero varieties are even more interesting in the context of pairing-based cryptography. 
The main motivation comes from~\cite{rubin-silverberg-09}, where Rubin and Silverberg show that supersingular 
abelian varieties of dimension greater than one offer more security than supersingular elliptic curves, 
for the same group size. Trace zero varieties are explicitly mentioned in~\cite{rubin-silverberg-09} as one of the 
most relevant examples of abelian varieties for pairing-based cryptography. In order to estimate the security of $T_5$ 
in pairing-based cryptosystems, one needs to compare the complexities of solving the DLP in $T_5$ and in $\F_{q^{5k}}$, 
where $k$ is the embedding degree, i.e., the smallest integer $k$ such that $\F_{q^{5k}}$ contains the image of the pairing.
A first observation is that, since the results of \cite{rubin-silverberg-09} hold over fields of any characteristic, one should avoid fields of 
small characteristic, so that the recent attacks from \cite{gologlu-granger-mcguire-zumbragel-13-1,joux-13,gologlu-granger-mcguire-zumbragel-13-2,caramel-13,barbulescu-gaudry-joux-thome-13} do not apply.
Over a field of large characteristic, the cover and index calculus attacks that we discussed in the previous paragraph 
do not seem to pose a serious security concern in the context of pairing-based cryptography. 
This is due to the fact that, for most supersingular elliptic curves, the Frey-R\"uck or the MOV attack have 
lower complexity than cover and index calculus attacks in the lines of ~\cite{gaudry-09,ghs,diem-ghs,diem-kochinke}. 
In some cases however, the choice of the security parameter may need to be adjusted, according to the complexity of 
these index calculus attacks.
As an example, let us discuss the choice of parameters for a pairing with 80-bits security. 
One needs a field of about 1024-bits as the target of the pairing (avoiding fields of small characteristic). 
If we assume that the pairing ends up in an extension field of degree $k=2$ of the original field $\F_{q^5}$ (this is the case 
for most supersingular elliptic curves), then $q$ should be a 102-bit number. A $q^{3/2}$ attack on the group $T_5$ 
on which the pairing is defined would result in 153-bit security, while a $q^{4/3}$ attack would result in 136-bit security. 
However, on the side of the finite field the system has an 80-bit security, so the attacks 
from~\cite{gaudry-09,ghs,diem-ghs,diem-kochinke} end up not  influencing the overall security of the pairing-based 
cryptosystem in this case.
A related comment is that an interesting case for pairings is when the DLP in $T_5$ and in the finite field extension 
$\F_{q^{5k}}$ where the pairing maps have the same complexity. In order to achieve this in our previous example, 
we would need to have a security parameter $k = 4$, which can be achieved by supersingular trace zero varieties. 
In this case, the complexity of solving a DLP in $T_5$ and in $\F_{q^{20}}$ are both 
about 80-bits when $q\sim 2^{53}$. 
Summarizing, the complexity of the DLP in $T_5$ coming from the works~\cite{gaudry-09,ghs,diem-ghs,diem-kochinke} 
influences the choice of the specific curves that we use in pairing-based applications, 
since it influences the security parameter $k$ that makes the hardness of solving the DLP in $T_5$ and in 
$\F_{q^{5k}}$ comparable, and the value of $k$ depends on the choice of the curve. However, in general 
it does not influence the size $q$ of the field that we work on, since an attack can influence the value of $q$ only if 
it has lower complexity than the Frey-R\"uck or the MOV attack for supersingular elliptic curves. Therefore, 
using trace zero varieties instead of elliptic curve groups in pairing-based cryptography has the advantages of 
enhancing the security and allowing for more flexibility in the setup of the system.

\section{Conclusion} \label{sec:conclusion}

The Semaev polynomials give rise to a useful equation describing the
$\Fq$-rational points of the trace zero variety. Its significance is
that it is one single equation in the $x$-coordinates of the elliptic
curve points, but unfortunately its degree grows quickly with $n$. Using this
equation, we obtain an efficient method of point compression and
decompression. It computes a representation for the $\Fq$-points of
the trace zero variety that is optimal in size for $n=3$ and for $n=5$. Our polynomials have lower degree than
those used in the representations of \cite{silverberg-05} (1 compared
to 4 for $n=3$, and 6 compared to 27 for $n=5$) and \cite{naumann} (1 compared to 3 for $n=3$), thus allowing more
efficient compression and decompression and less ambiguity in the recovery process. Finally, our representation is interesting from a mathematical point of view, since it is the first representation (to our knowledge) that is compatible with scalar multiplication of points.

\vspace{.6cm}\noindent\small {\bf Acknowledgements}\quad
We thank Pierrick Gaudry and Peter Schwabe for helpful discussions and Tanja Lange for pointing out the work of Naumann. We are grateful to the mathematics department of the Univerity of Z\"urich for access to their computing facilities. The authors were supported by the Swiss National Science Foundation under Grant No.\ 123393.

\bibliographystyle{plain}
\bibliography{lit}

\end{document}